\numberwithin{equation}{section}
\newtheorem{thm}{Theorem}[section]
\newtheorem{Con}[thm]{Conjecture}
\newtheorem{cor}[thm]{Corollary}
\newtheorem{pro}[thm]{Proposition}
\newtheorem{lem}[thm]{Lemma}
\theoremstyle{definition}
\newtheorem{defin}[thm]{Definition}
\newtheorem{rem}[thm]{Remark}
\numberwithin{equation}{section}
\theoremstyle{definition}
\newtheorem*{ack}{Acknowledgements}
\newcommand{\Q}{\mathbb{Q}}
\newcommand{\R}{\mathbb{R}}
\newcommand{\eps}{\varepsilon}
\newcommand{\si}{\sigma}
\newcommand{\abs}[1]{\left\lvert#1\right\rvert}
\newcommand{\vv}{\mathbf{v}}
\newcommand{\Log}[1]{\operatorname{Log}(#1)}
\newcommand{\CycUnitGp}{\mathcal{C}}
\newcommand{\prob}[1]{\operatorname{Pr}\left[ #1\right]}
\newcommand{\bb}{\mathbf{b}}
\newcommand{\norm}[1]{\left\lVert#1\right\rVert}
\newcommand{\Ok}{\mathcal{O}_K}
\newcommand{\bigO}[1]{O\left(#1\right)}
\begin{document}
\title[Mean square of inverses of Dirichlet $L$-functions involving conductors]{Mean square of inverses of Dirichlet $L$-functions involving conductors} 
\author[I.-I. Ng, Y. Toma]{Iu-Iong Ng and Yuichiro Toma}
\address[Ng]{Graduate School of Mathematics, Nagoya University,  Furo-cho, Chikusa-ku, Nagoya 464-8602, Japan.}
\email{m20048d@math.nagoya-u.ac.jp}
\address[Toma]{Graduate School of Mathematics, Nagoya University,  Furo-cho, Chikusa-ku, Nagoya 464-8602, Japan.}
\email{m20034y@math.nagoya-u.ac.jp}
\makeatletter
\@namedef{subjclassname@2020}{\textup{2020} Mathematics Subject Classification}
\makeatother
\subjclass[2020]{11T71,11R18,11M06,68W40}
\keywords{Short Generator Problem (SGP), log-cyclotomic-unit lattice, negative moments of Dirichlet $L$-functions}
\maketitle

\begin{abstract}
We deal with negative square moments of Dirichlet $L$-functions. Summing over characters modulo $q$, we obtain an asymptotic formula for the negative second moment of $L(1,\chi)$ involving conductors. As an application, we give the improved lower bound on the success probability of the algorithm which recovers a short generator of the input generator of a principal ideal sampled from a specific Gaussian distribution in cyclotomic number fields.
\end{abstract} 

\section{Introduction}
Let $s=\sigma+it$ be a complex variable. In analytic number theory, special values of Dirichlet $L$-functions on the real axis have been receiving considerable attention, such as vanishing or non-vanishing at the central point $s=1/2$, and the quantities $L(1,\chi)$ for estimating the class number of cyclotomic fields. Let $q$ be a positive integer and let $\chi$ be a Dirichlet character modulo $q$. The corresponding Dirichlet $L$-function is defined to be 
\[
L(s,\chi):=\sum_{n=1}^\infty \frac{\chi(n)}{n^s}
\]
for $\si>1$, and can be continued analytically over the whole plane, except for the possible pole at $s=1$. 

The study of the mean values of Dirichlet $L$-functions at $s=1$ can be traced back to Paley~\cite{Paley32} and Selberg~\cite{Sel46} in order to estimate the class number of the cyclotomic field $\Q (\xi_q)$, where $\xi_q$ is a primitive $q$-th root of unity. They proved in the case of $q$ being a prime number that
\begin{align*}
    \sum_{\chi \neq \chi_0} \abs{L(1,\chi)}^2 = \zeta(2)q+O\left((\log q)^2\right).
\end{align*}
The above formula was further studied by Slavutskii~\cite{S85, S86}, Zhang~\cite{Z90-1}, and by Katsurada and Matsumoto~\cite{KM94}. For more general $q$
, the asymptotic formulas for the $2k$-th power mean value of $\abs{L(1,\chi)}$ were proved by Zhang and Wang~\cite{ZW}. 

\subsection{Negative moment}
\label{sec:negative}
Compared with (positive) mean values, negative mean values of Dirichlet $L$-functions at $s=1$ have not been much studied. Zhang~\cite{Z93} first studied the $2k$-th negative moments of Dirichlet $L$-functions at $s=1$. Later, Zhang and Deng~\cite{ZD02} also considered a similar sum. For the family of quadratic Dirichlet $L$-functions, Granville and Soundararajan~\cite{GS03} obtained asymptotic formulae for negative moments of $L(1, \chi_d)$. Further, in the function field setting, negative moments of quadratic Dirichlet $L$-functions were obtained by Lumley~\cite{Lu19}, and shifted negative moments of quadratic Dirichlet $L$-functions over function fields were proved by Bui, Florea and Keating~\cite{BFK} and Florea~\cite{F24}. In addition, Ihara, Murty and Shimura~\cite{IMS09} and Matsumoto and Saad Eddin ~\cite{MS} studied the mean value of the $2k$-th power of the logarithmic derivatives of Dirichlet $L$-functions at $s=1$. 

The asymptotic relation for negative moments of the Riemann zeta-function $\zeta(s)$ was considered by Gonek~\cite{G89}, and the following conjecture was proposed.

\begin{Con}[Gonek]
    Let $k > 0$ be fixed. Then 
    \begin{align*}
        \frac{1}{T} \int_0^T \abs{\zeta\left( \frac{1}{2}+\frac{\delta}{\log T}+it\right)}^{-2k} dt \asymp \left( \frac{\log T}{\delta}\right)^{k^2}
    \end{align*}
    holds uniformly for $1 \leq \delta \leq \log T$, and 
    \begin{align*}
        \frac{1}{T} \int_0^T \abs{\zeta\left( \frac{1}{2}+\frac{\delta}{\log T}+it\right)}^{-2k} dt \asymp \begin{cases}
            (\log T)^{k^2} & k <\frac{1}{2}, \\
            (\log \frac{e}{\delta})(\log T)^{k^2} & k =\frac{1}{2}, \\
            \delta^{1-2k} (\log T)^{k^2} & k >\frac{1}{2}
        \end{cases}
    \end{align*}
    holds uniformly for $0 < \delta \leq 1$.
\end{Con}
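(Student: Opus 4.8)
\emph{On the approach.} Gonek's statement is a longstanding open conjecture, so the following is a strategy together with the point at which it stalls rather than a complete proof; a realistic target is a conditional upper bound of the predicted order and a matching unconditional lower bound, uniform in $\delta$. The plan is to combine the ``moments recipe'' --- the random-matrix heuristic for the unitary ensemble $U(N)$ with $N\approx\log T$, in the spirit of Conrey--Farmer--Keating--Rubinstein--Snaith --- with mollification, treating the upper and lower bounds separately since they require different inputs. Write $\eps:=\delta/\log T$, so that $s=\tfrac12+\eps+it$ sits at distance $\eps$ to the right of the critical line; the role of the shift is precisely to regularise the poles of $1/\zeta$ at the zeros of $\zeta$, and the three-case phase transition at $k=\tfrac12$ should reflect the integrability threshold of $\abs{\zeta}^{-2k}$ near a simple zero.

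\textbf{Upper bound.} Introduce a Dirichlet polynomial mollifier $M(s)=\sum_{n\le T^{\theta}}\mu(n)\,P\!\big(\tfrac{\log(T^{\theta}/n)}{\log T^{\theta}}\big)n^{-s}$ with $P(0)=0$, $P(1)=1$ and $\theta<1$, and write $\abs{\zeta(s)}^{-2k}=\abs{M(s)}^{2k}\cdot\abs{\zeta(s)M(s)}^{-2k}$. On the bulk set of $t$ where $\zeta(s)M(s)$ is close to $1$ one has $\abs{\zeta(s)}^{-2k}\asymp\abs{M(s)}^{2k}$; for integer $k$, opening the $2k$-th power and extracting the diagonal reduces $\tfrac1T\int_0^T\abs{M(\tfrac12+\eps+it)}^{2k}\,dt$ to an arithmetic sum of the shape $\sum_n d_k(n)^2 w(n)\,n^{-1-2\eps}$ with $w$ a bounded weight, and since $n^{-2\eps}$ effectively truncates at $n\lesssim e^{1/\eps}=e^{(\log T)/\delta}$, this is $\asymp\big(\min(\log T,\,1/\eps)\big)^{k^2}=(\log T/\delta)^{k^2}$ for $1\le\delta\le\log T$ and $\asymp(\log T)^{k^2}$ for $\delta\le1$, $k<\tfrac12$. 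The remaining work is to bound the ``bad'' set, where $\abs{\zeta M}$ is small and hence $t$ lies within $\asymp\eps$ of a zero of $\zeta$; one attacks this with zero-density and zero-spacing estimates together with pointwise lower bounds for $\abs{\zeta}$ off the zeros, and it is here that unconditionality fails once $k\ge\tfrac12$.

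\textbf{Lower bound.} Apply Cauchy--Schwarz against an auxiliary Dirichlet polynomial $A(s)$ --- a truncation of the Dirichlet series of $\zeta(s)^{k}$, or again a mollifier-type polynomial --- arranged so that $\zeta(s)^{-k}A(s)\approx1$ on average, giving
\[
\frac1T\int_0^T\abs{\zeta(s)}^{-2k}\,dt\;\ge\;\frac{\big|\tfrac1T\int_0^T\zeta(s)^{-k}A(s)\,dt\big|^{2}}{\tfrac1T\int_0^T\abs{A(s)}^{2}\,dt}.
\]
Both integrals on the right are mean values of Dirichlet polynomials, evaluated by the diagonal, and the ratio is designed to reproduce $(\log T/\delta)^{k^2}$. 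In the regime $\delta\le1$ the transition at $k=\tfrac12$ emerges here: near a simple zero $\rho=\tfrac12+i\gamma$ one has $\zeta(\tfrac12+\eps+it)\approx c\,(\eps+i(t-\gamma))$, so $\abs{\zeta}^{-2k}$ behaves like $(\eps^{2}+(t-\gamma)^{2})^{-k}$, whose integral over a zero neighbourhood converges as $\eps\to0$ when $k<\tfrac12$ and otherwise contributes the factor $\eps^{1-2k}$, with a logarithmic factor at the borderline $k=\tfrac12$ --- matching the three cases of the conjecture.

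\textbf{Main obstacle.} The crux is unconditional control of $\zeta$ near its zeros: even finiteness of the left-hand side for $k\ge\tfrac12$ is not known without RH, and even under RH, pinning down the constant and the $\delta$-dependent transition requires a delicate treatment of the off-diagonal (shifted-divisor) terms uniformly in the shift. Proving both the upper and the lower bound with matching orders, uniformly over the full range $0<\delta\le\log T$, is beyond current technology --- which is why the statement remains a conjecture; the scheme above realistically yields the unconditional lower bound and, under RH, the upper bound in restricted ranges of $k$ and $\delta$.
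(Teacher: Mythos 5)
This is stated in the paper as a \emph{conjecture} (due to Gonek), not a theorem; the paper supplies no proof and explicitly records it as open, citing only partial results (Gonek's conditional lower bounds for all $k>0$ when $1\le\delta\le\log T$ and for $k<1/2$ when $0<\delta\le1$, and Bui--Florea's conditional upper bounds in restricted ranges, with the caveat that for $k>1$, $0<\delta\le1$ the conjecture may even conflict with random-matrix predictions). You are therefore correct to treat it as unprovable by current methods and to present a strategy rather than a proof. Your sketch is a reasonable account of the state of the art: the mollification/Cauchy--Schwarz lower-bound mechanism, the diagonal evaluation giving the $(\log T/\delta)^{k^2}$ order, the local $|\zeta(\tfrac12+\eps+it)|\approx|\eps+i(t-\gamma)|$ model explaining the $k=\tfrac12$ phase transition, and the obstruction that one cannot control $\zeta$ near its zeros unconditionally for $k\ge\tfrac12$ all match the literature the paper cites. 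One point worth adding for completeness: the paper itself flags that for $k>1$ and $0<\delta\le1$ the conjectured order is believed to be \emph{wrong} (in tension with Berry--Keating, Fyodorov--Keating, Forrester--Keating), so a ``proof'' in that range is not merely out of reach but likely impossible as stated; your proposal does not mention this and implicitly treats all three $\delta\le1$ cases as equally credible targets.
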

Gonek~\cite{G89} proved lower bounds which attain the conjectural order of magnitude for $1 \leq \delta \leq \log T$
and all $k > 0$, and for $0 < \delta \leq 1$ for $k < 1/2$ 
assuming the Riemann Hypothesis (RH). Florea and Bui~\cite{BF24} obtained upper bounds in some ranges of $\delta$ under the RH. However, for the case $k > 1$ and $0<\delta \leq 1$, the above conjecture seems to contradict with random matrix theory computations due to Berry and Keating~\cite{BK02}, Fyodorov and Keating~\cite{FK03}, and Forrester and Keating~\cite{FK04}. For more details, see pp.248 in \cite{BF24}.

\subsection{Main results}
In~\cite{NT24+}, the authors proved an asymptotic formula for 
\begin{align*}
    \sum_{\chi \neq \chi_0}\abs{L(1,\chi)}^{-2k}, 
\end{align*}
where $k \in \mathbb{N}$, assuming the truth of the Generalized Riemann Hypothesis (GRH). In this paper, by the standard method that is also employed in~\cite{MS}, we remove the assumption of the GRH in the result given in~\cite{NT24+} for $k=2$.

\begin{thm} 
    \label{thm:negative square moment}
    Let $q$ be a positive integer and $\chi$ be a Dirichlet character modulo $q$. Then we have
    \begin{align*}
    \sum_{\substack{\chi \neq \chi_0 \\ \chi(-1)=1}} \frac{1}{\abs{L(1,\chi)}^{2}}&= \frac{\zeta(2)}{2\zeta(4)}\prod_{p \mid q} \left( 1+\frac{1}{p^2}\right)^{-1} \varphi(q) + O\left(\exp \left( C\frac{\log q}{\log\log q}\right) \right) \\
    &\quad+\delta_1\cdot O\left( \left(1-\beta_1\right)^{-1}\left( (\log q)^2+ (1-\beta_1)^{-1}\right)\right)
    \end{align*}
    for an absolute constant $C>0$, where $\beta_1$ denotes the exceptional zero (defined in Proposition \ref{pro:exceptional zero}), and $\delta_1=1$ if $\beta_1$ exists, or $\delta_1=0$ otherwise.
\end{thm}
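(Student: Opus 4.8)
The plan is to approximate $1/L(1,\chi)$ by a truncated Dirichlet series, expand $\abs{L(1,\chi)}^{-2}$ as a product of two such series, average over the even characters by orthogonality, and read off the main term from the diagonal. First I would fix a smooth weight $V$ with $V(x)=1$ for $x$ near $0$ and rapid decay at infinity, so that its Mellin transform $\widetilde V$ is holomorphic for $\re z>0$ with a simple pole at $z=0$ of residue $1$, and set $X=q^{A}$ for a sufficiently large absolute constant $A$. For $\chi\neq\chi_0$ modulo $q$ one has
\[
\sum_{n\ge 1}\frac{\mu(n)\chi(n)}{n}\,V\!\left(\frac nX\right)=\frac{1}{2\pi i}\int_{(2)}\frac{\widetilde V(z)}{L(1+z,\chi)}\,X^{z}\,dz ,
\]
and I would shift the contour to the curve $\re z=-c_0/\log\bigl(q(\abs{\im z}+2)\bigr)$ for a suitable small $c_0>0$, which lies within the zero-free region of Proposition~\ref{pro:exceptional zero}. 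The pole at $z=0$ contributes $1/L(1,\chi)$; the shifted integral is bounded by the standard estimate $\abs{L(1+z,\chi)}^{-1}\ll\log\bigl(q(\abs{\im z}+2)\bigr)$ on that curve, multiplied — when $\chi$ is imprimitive, induced by a primitive character $\chi^{*}$ of conductor $f\mid q$ — by the Euler factors $\prod_{p\mid q,\,p\nmid f}\abs{1-\chi^{*}(p)p^{-1-z}}^{-1}\ll 2^{\omega(q)}\ll\exp(C\log q/\log\log q)$, which is where the first error term will come from after summing over characters; and, for the exceptional character $\chi_{1}$, there is in addition the contribution of the zero at $z=\beta_{1}-1$ (either crossed, or merely lying close to the contour), whose magnitude is governed by the behaviour of $L(s,\chi_{1})$ near $s=1$ and feeds into the $\delta_{1}$-term.

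Writing $1/L(1,\chi)=P_{\chi}+R_{\chi}$ with $P_{\chi}=\sum_{n}\mu(n)\chi(n)n^{-1}V(n/X)$ and $R_{\chi}$ the remainder just estimated, one has $\abs{L(1,\chi)}^{-2}=\abs{P_{\chi}}^{2}+O\bigl(\abs{R_{\chi}}(\abs{P_{\chi}}+\abs{R_{\chi}})\bigr)$, and with the uniform pointwise bounds above the error summed over the even $\chi\neq\chi_{1}$ stays within $O(\exp(C\log q/\log\log q))$, the single term $\chi=\chi_{1}$ being absorbed into the $\delta_{1}$-term. For the main part, expanding the square and using
\[
\sum_{\chi(-1)=1}\chi(m)\overline{\chi}(n)=\frac{\varphi(q)}{2}\Bigl(\mathbf{1}_{m\equiv n\,(q)}+\mathbf{1}_{m\equiv -n\,(q)}\Bigr)\mathbf{1}_{(mn,q)=1},
\]
the diagonal $m=n$ contributes $\frac{\varphi(q)}{2}\sum_{(n,q)=1}\mu(n)^{2}n^{-2}V(n/X)^{2}$, which after removing the weight at cost $O(\varphi(q)/X)$ equals $\frac{\varphi(q)}{2}\prod_{p\nmid q}(1+p^{-2})=\frac{\zeta(2)}{2\zeta(4)}\prod_{p\mid q}(1+p^{-2})^{-1}\varphi(q)$, the asserted main term; the remaining congruences force $\abs{m-n}$ or $m+n$ to be a positive multiple of $q$, say $qr$ with $r\ll X(\log X)/q$, and contribute at most $\varphi(q)\sum_{r}\sum_{m}\tfrac{1}{m(m+qr)}\ll(\log q)^{O(1)}$, again dominated by $\exp(C\log q/\log\log q)$.

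I expect the principal difficulty to be the uniform control of the error: one must check that the shifted integrals summed over all even $\chi\neq\chi_{1}$ (with the imprimitive Euler factors included), the off-diagonal congruence terms, and the removal of the weight together stay below $\exp(C\log q/\log\log q)$ for a single absolute constant $C$; and — most delicately — one must pin down the exact dependence on $1-\beta_{1}$ of the exceptional-character contribution (its $R_{\chi_{1}}$, its $P_{\chi_{1}}$, and the relevant cross terms), which requires indenting the contour around $z=\beta_{1}-1$ and invoking sharp estimates for $L(s,\chi_{1})$ and $L'(\beta_{1},\chi_{1})$ near $s=1$, so that this contribution takes the precise shape $\delta_{1}\cdot O\bigl((1-\beta_{1})^{-1}\bigl((\log q)^{2}+(1-\beta_{1})^{-1}\bigr)\bigr)$.
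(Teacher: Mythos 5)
Your strategy is close to the paper's: both approximate $1/L$ via a smooth Mellin truncation, shift the contour into the classical zero-free region, handle the exceptional zero by indenting, and then apply orthogonality so that the diagonal produces $\frac{\zeta(2)}{2\zeta(4)}\prod_{p\mid q}(1+p^{-2})^{-1}\varphi(q)$. The paper packages this slightly differently — it puts $G_0(s)=\sum_{\chi\neq\chi_0,\,\chi(-1)=1}\frac{1}{L(s,\chi)L(s,\overline\chi)}$ into a single Mellin integral $S_0(X)=\frac{1}{2\pi i}\int_{(3)}G_0(s)X^{s-1}\Gamma(s-1)\,ds$, expands $S_0(X)$ as the smoothed double sum $\sum_\chi\sum_{m,n}\frac{\mu(m)\mu(n)\chi(m)\overline\chi(n)}{mn}e^{-mn/X}$, and also evaluates $S_0(X)$ by the residue theorem, where the pole of $\Gamma(s-1)$ at $s=1$ pulls out $\sum_\chi|L(1,\chi)|^{-2}$ exactly, with no cross-term — whereas you approximate $1/L(1,\chi)=P_\chi+R_\chi$ character by character and then square, which forces you to control $\sum_\chi|R_\chi|\,(|P_\chi|+|R_\chi|)$. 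That route can be made to work, but it is slightly more delicate than the paper's.

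The genuine gap is your choice $X=q^A$ with $A$ a fixed constant; it is too small. On the shifted contour $\re z=-c_0/\log\bigl(q(\abs{\im z}+2)\bigr)$ the factor $\abs{X^z}=X^{\re z}$ near $\im z=0$ equals $\exp\bigl(-c_0 A\log q/\log(2q)\bigr)$, which tends to the \emph{constant} $e^{-c_0A}$ rather than to a negative power of $q$. Hence $R_\chi\ll e^{-c_0A}\,(\log q)\,2^{\omega(q)}$ does not decay in $q$, and the cross-term error
\[
\sum_{\substack{\chi\neq\chi_0\\ \chi(-1)=1}}\abs{R_\chi}\bigl(\abs{P_\chi}+\abs{R_\chi}\bigr)\;\gg\;\varphi(q)\,e^{-c_0A}(\log q)^2
\]
is of the same order of magnitude as (in fact larger than) the main term $\asymp\varphi(q)$, for every fixed $A$. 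So the argument as written does not establish the asymptotic, let alone the claimed error term $\exp(C\log q/\log\log q)$. The repair is to make $X$ quasi-polynomial in $q$: the paper takes $X=\exp\bigl(\tfrac{4}{c_0}(\log q)^2\bigr)$, so that $X^{-c_0/\log(2q)}\asymp q^{-c'}$ for a positive constant $c'$; only then does $\sum_\chi\abs{R_\chi}\abs{P_\chi}$ (say via Cauchy--Schwarz against $\sum_\chi\abs{P_\chi}^2\ll\varphi(q)$) become $o(1)$, and the off-diagonal and weight-removal costs remain harmless with $\log X\asymp(\log q)^2$ in place of $\log X\asymp\log q$. With that single change your outline matches the paper's proof.
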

From Siegel’s theorem (see \cite[Corollary 11.15]{MV}) which asserts that $1-\beta_1 \geq C(\eps)q^{-\eps}$, we have $\left(1-\beta_1\right)^{-1}\left( (\log q)^2+ (1-\beta_1)^{-1}\right) \ll_\eps  q^{\eps}$ by resetting $\eps$. Hence the above becomes 
\begin{align}
\label{consequence of the Siegel Theorem}
    \sum_{\substack{\chi \neq \chi_0 \\ \chi(-1)=1}} \frac{1}{\abs{L(1,\chi)}^{2}}&=\frac{\zeta(2)}{2\zeta(4)}\prod_{p \mid q} \left( 1+\frac{1}{p^2}\right)^{-1} \varphi(q) +O_{\eps}\left(q^\eps\right).
\end{align}

Moreover, we give the asymptotic formula for the negative square moment involving the conductor under the assumption of the nonexistence of the exceptional zero for prime power $q$. 

\begin{thm} 
    \label{thm:negative square moment with conductor}
    Let $q=p^k$ be a power of a prime and $\chi$ be a Dirichlet character modulo $q$. Let $f_\chi$ be the conductor of $\chi$. Assuming that the exceptional zero does not exist, then we have
    \begin{align*}
    \sum_{\substack{\chi \neq \chi_0 \\ \chi(-1)=1}} \frac{1}{f_\chi\abs{L(1,\chi)}^{2}}&=\frac{\zeta(2)}{2\zeta(4)} \frac{(p-1)^2}{p^2+1} k + O\left(\frac{1}{\log p}\right) \\
    & \qquad +O\left(\frac{k^2(\log p)^2(\log k+\log\log p)^2}{p}\right)
    \end{align*}
     for $k =o\left(\frac{p}{(\log p)^4}\right)$; otherwise, the above formula implies only the upper bound estimate, i.e.,
     \[
\sum_{\substack{\chi \neq \chi_0 \\ \chi(-1)=1}} \frac{1}{f_\chi\abs{L(1,\chi)}^{2}} \ll 
    \frac{k^2(\log p)^2(\log k+\log\log p)^2}{p}. 
\]
\end{thm}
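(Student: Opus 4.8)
The plan is to organize the sum according to the conductor. Write $q=p^k$, and recall that every non-principal character $\chi$ modulo $p^k$ is induced by a unique primitive character $\chi^*$ whose modulus is the conductor $f_\chi=p^j$ for some $1\le j\le k$; since the only prime dividing $q$ is $p$ and $\chi^*(p)=0$, one has $L(s,\chi)=L(s,\chi^*)(1-\chi^*(p)p^{-s})=L(s,\chi^*)$, so in particular $\abs{L(1,\chi)}=\abs{L(1,\chi^*)}$. As $\chi$ runs through the even non-principal characters modulo $p^k$, the map $\chi\mapsto\chi^*$ is a bijection onto the even primitive characters of $p$-power conductor at most $p^k$, so grouping by conductor gives
\[
\sum_{\substack{\chi\neq\chi_0\\ \chi(-1)=1}}\frac{1}{f_\chi\abs{L(1,\chi)}^{2}}=\sum_{j=1}^{k}\frac{S^*_j}{p^j},
\]
where $S^*_j$ denotes the sum of $\abs{L(1,\psi)}^{-2}$ over the even primitive characters $\psi$ modulo $p^j$. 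Writing $S_j$ for the analogous sum over \emph{all} even non-principal characters modulo $p^j$, the same bijection gives $S_j=\sum_{i=1}^{j}S^*_i$, hence $S^*_j=S_j-S_{j-1}$ with $S_0:=0$, and one summation by parts yields
\[
\sum_{\substack{\chi\neq\chi_0\\ \chi(-1)=1}}\frac{1}{f_\chi\abs{L(1,\chi)}^{2}}=\frac{S_k}{p^k}+(p-1)\sum_{j=1}^{k-1}\frac{S_j}{p^{j+1}}.
\]

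The next step is to estimate each $S_j$ uniformly in $1\le j\le k$. Since the only possible exceptional character has $p$-power conductor, the assumption that the exceptional zero does not exist holds for every modulus $p^j$ at once, so I would re-run the proof of Theorem~\ref{thm:negative square moment} for the prime-power modulus $p^j$ --- that is, the standard method of \cite{MS} and \cite{NT24+}: approximate $L(1,\psi)^{-1}$ by a short Dirichlet polynomial $\sum_{n\le y}\mu(n)\psi(n)n^{-1}$ with $y$ chosen uniformly in terms of $q$, using that the absence of an exceptional zero gives $L(1,\psi)\gg(\log q)^{-1}$ and $L(s,\psi)^{-1}\ll\log q$ on a standard zero-free region; then expand $\abs{L(1,\psi)}^{-2}$, sum over the even characters modulo $p^j$ by orthogonality, and isolate the diagonal $m=n$ --- which should give
\[
S_j=\frac{\zeta(2)}{2\zeta(4)}\Big(1+\frac1{p^2}\Big)^{-1}\varphi(p^j)+O\!\Big(\frac{\varphi(p^j)}{\log q}\Big)+O\big((\log q)^2(\log\log q)^2\big).
\]
Here the diagonal limit is $\tfrac12\varphi(p^j)\prod_{\ell\neq p}(1+\ell^{-2})=\frac{\zeta(2)}{2\zeta(4)}(1+p^{-2})^{-1}\varphi(p^j)$, the term $O(\varphi(p^j)/\log q)$ records the approximation of $L(1,\psi)^{-1}$, and $O((\log q)^2(\log\log q)^2)$ absorbs the off-diagonal and truncation losses (measured on the scale $q$, hence uniform in $j$); the one delicate point is that the congruences $m\equiv-n\pmod{p^j}$ acquire extra solutions for the few small $j$ with $p^j\le 2y$, which must be bounded directly.

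Finally I would assemble the pieces with $\varphi(p^j)=p^{j-1}(p-1)$. The main terms contribute
\[
\frac{\zeta(2)}{2\zeta(4)}\cdot\frac{p^2}{p^2+1}\Big(\frac{p-1}{p}+(p-1)^2\sum_{j=1}^{k-1}\frac1{p^2}\Big)=\frac{\zeta(2)}{2\zeta(4)}\cdot\frac{(p-1)^2k}{p^2+1}+\frac{\zeta(2)}{2\zeta(4)}\cdot\frac{p-1}{p^2+1},
\]
whose last summand is $O(1/p)$; the error terms contribute
\[
\ll\frac{(\log q)^2(\log\log q)^2}{p^k}+(p-1)\sum_{j=1}^{k-1}\frac1{p^{j+1}}\Big(\frac{\varphi(p^j)}{\log q}+(\log q)^2(\log\log q)^2\Big)\ll\frac{(p-1)^2(k-1)}{p^2\log q}+\frac{(\log q)^2(\log\log q)^2}{p},
\]
which, on substituting $\log q=k\log p$ and $\log\log q=\log k+\log\log p$, is $\ll\frac1{\log p}+\frac{k^2(\log p)^2(\log k+\log\log p)^2}{p}$; absorbing $O(1/p)$ into $O(1/\log p)$ gives the displayed formula. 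When $k=o\!\big(p/(\log p)^4\big)$ the term $k^2(\log p)^2(\log k+\log\log p)^2/p$ is $o(k)$, so (since $(p-1)^2/(p^2+1)\to1$) the main term $\asymp k$ dominates and the asymptotic is genuine; in the complementary range this error term is $\gg k$, hence absorbs both the main term and $O(1/\log p)$, leaving only the stated upper bound. The hard part will be the second step: producing the estimate for $S_j$ with an error uniform in $j$ and only polynomial in $\log q$ and $\log\log q$ --- sharper than the $\exp(C\log q/\log\log q)$ that Theorem~\ref{thm:negative square moment} yields for a general modulus --- which is exactly where the single-prime structure of $q$ and the no-exceptional-zero hypothesis must be exploited.
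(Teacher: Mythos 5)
Your route is a genuine alternative to the paper's, and it essentially works, with a couple of minor slips. The paper does not decompose the sum by conductor and does not invoke a uniform version of Theorem~\ref{thm:negative square moment} for each modulus $p^j$. Instead it folds the weight $1/f_\chi$ directly into the orthogonality relation: Lemma~\ref{lem:Character orthogonality} with $\mathfrak{b}=1$ yields sums of the shape $\sum_{d\mid q,\,d>1}d^{-1}\sum_{l\mid d}\cdots$, and that built-in $1/d$ is exactly what produces the convergent geometric series $\sum_j p^{-j}$, both in the main term (where $\sum_{j\geq 1}\varphi^*(p^j)p^{-j}$ collapses to $\frac{(p-1)^2}{p^2}k+O(p^{-2})$) and in the error terms. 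Everything is then handled by a single contour shift for the full modulus $q=p^k$ with $X=\exp\bigl(\tfrac{6}{c_0}\log q\log\log q\bigr)$, applied once to $G_1(s)$. Your conductor decomposition, the identity $L(1,\chi)=L(1,\chi^*)$, the Abel summation $\sum_j S^*_j/p^j = S_k/p^k+(p-1)\sum_{j<k}S_j/p^{j+1}$, and a uniform per-modulus estimate for each $S_j$ arrive at the same arithmetic by a different organization, making the source of the geometric decay more transparent but requiring Theorem~\ref{thm:negative square moment} to be re-derived uniformly across moduli $p^j$ --- which is the hard step you correctly flag, and which is feasible precisely because for prime-power moduli the $\tau_3$-type divisor sum is polynomial in $j$ rather than $\exp(C\log q/\log\log q)$.

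Two caveats. (i) Your claimed per-level error $O\bigl((\log q)^2(\log\log q)^2\bigr)$ for $S_j$, uniform in $j$, is not quite what the computation gives: the divisor sum $\sum_{d\mid p^j,\,d>1}\sum_{l\mid d}\varphi(l)/l\ll j^2$, so the error is really $O\bigl(j^2(\log q\log\log q)^2\bigr)$. This is harmless, since $\sum_j j^2p^{-j}$ converges and the weighted contribution remains $O\bigl((\log q\log\log q)^2/p\bigr)$, but your statement should carry the $j^2$. (ii) Your main-term assembly takes $S_j\approx C\,\varphi(p^j)$, whereas the precise main term of the per-modulus analogue is $C\bigl(\varphi(p^j)-1\bigr)$, because $\sum_{d\mid p^j,\,d>1}\varphi^*(d)=\varphi(p^j)-1$. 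Tracking the $-1$ through your Abel sum subtracts $C/p$ from your total, turning the extra summand $\frac{\zeta(2)}{2\zeta(4)}\frac{p-1}{p^2+1}$ into $-\frac{\zeta(2)}{2\zeta(4)(p^2+1)}$, which indeed matches the paper's exact computation of $\sum_{j}\varphi^*(p^j)p^{-j}=\frac{(p-1)^2k-1}{p^2}$. Either way the correction is absorbed into $O(1/\log p)$, as you say, so your conclusion stands.
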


If $q=p^k$ is a power of an odd prime $p$, then there is exactly one quadratic character with conductor $p$. If $q=2^k$, then there are at most three primitive quadratic characters with modulus $4,8$ (see \cite[Section 9.3]{MV}). Also, by using the lower bounds given by Landau~\cite{Lan}, $\abs{L(1,\chi)} \gg 1/ \log f_\chi$ for non-quadratic primitive character $\chi$ and $\abs{L(1,\chi)} \gg 1/ \sqrt{ f_\chi}$ for primitive quadratic character $\chi$, and the fact $\sum_{\substack{\chi \neq \chi_0 \\ \chi(-1)=1}} \frac{1}{f_\chi} \leq \frac{k}{2}$ (see \cite[Claim 3.5]{CDPR15}), one can only deduce that 
    \begin{align*}
    \sum_{\substack{\chi \neq \chi_0 \\ \chi(-1)=1}} \frac{1}{f_\chi\abs{L(1,\chi)}^{2}}&=\sum_{\substack{\chi \text{ : non-quadratic} \\ \chi(-1)=1}} \frac{1}{f_\chi\abs{L(1,\chi)}^{2}} + O\left(1\right) \\
     &\ll k^3 (\log p)^2.
    \end{align*}
Hence our Theorem \ref{thm:negative square moment with conductor} implies a nontrivial upper bound even for $k \gg \frac{p}{(\log p)^4}$ under the assumption of the nonexistence of the exceptional zero.

\begin{rem}
It is possible to obtain the negative square moment of $L(1,\chi)$ for an odd character $\chi$. By the same argument as in the proof of Theorem \ref{thm:negative square moment} and Theorem \ref{thm:negative square moment with conductor}, we can find that for any integer $q$,
\begin{align*}
\sum_{\substack{\chi \\ \chi(-1)=-1}} \frac{1}{\abs{L(1,\chi)}^2} &\sim \frac{\zeta(2)}{2\zeta(4)}\prod_{p \mid q} \left( 1+\frac{1}{p^2}\right)^{-1} \varphi(q), 
\end{align*}
and for $q=p^k$ with $k=o\left( \frac{p}{(\log p)^4}\right)$ that
\begin{align*}
\sum_{\substack{\chi \\ \chi(-1)=-1}} \frac{1}{f_\chi \abs{L(1,\chi)}^{2}} &\sim \frac{\zeta(2)}{2\zeta(4)} \frac{(p-1)^2}{p^2+1} k.
\end{align*}
\end{rem}

\section{Auxiliary lemmas}
In order to obtain Theorem \ref{thm:negative square moment} and Theorem \ref{thm:negative square moment with conductor}, we prove auxiliary lemmas in this section. First, we recall some well-known results from~\cite{MV}. 

\begin{pro}
\label{pro:exceptional zero}
    Let $q \geq 1$. There is an effectively computable positive constant $c_0$ such that 
    \[
    \prod_{\chi \bmod q} L(s,\chi)
    \]
    has at most one zero $\beta_1$ in the region
    \[
    \si \geq 1-\frac{c_0}{\log q(\abs{t}+1)}.
    \]
    Such a zero, if it exists, is real simple and corresponds to a nonprincipal real character which we denote by $\chi_1$.
\end{pro}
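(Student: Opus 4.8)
The plan is to run the classical Landau--Page argument, which is also the route taken in \cite[\S11]{MV}. Two standard inputs are needed, both with explicit constants (this is exactly what makes $c_0$ effective): the partial-fraction bound coming from the Hadamard product,
\[
-\re\frac{L'}{L}(s,\chi)\le B\log\!\big(q(|t|+2)\big)+O(1)-\sum_{\rho}\re\frac{1}{s-\rho}\qquad(1<\sigma\le 2),
\]
valid for nonprincipal $\chi$, with an additional term $\re\frac1{s-1}$ on the right when $\chi=\chi_0$; here $\rho$ runs over the nontrivial zeros with multiplicity and each $\re\frac1{s-\rho}$ is nonnegative, so one is free to retain only whichever zeros are convenient. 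The second input is the elementary bound $-\frac{\zeta'}{\zeta}(\sigma)<\frac1{\sigma-1}+A$ on $(1,2]$. On top of these one uses the positivity of $3+4\cos\theta+\cos2\theta$ and, for real characters $\chi_1,\chi_2\bmod q$, of $(1+\chi_1(n))(1+\chi_2(n))$.

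First I would handle every zero except a real zero of a real character. Let $\rho=\beta+i\gamma$ be a zero of $L(s,\chi)$; apply the $3$-$4$-$1$ inequality to $\zeta$ at $\sigma$, to $L(\cdot,\chi)$ at $\sigma+i\gamma$ and to $L(\cdot,\chi^2)$ at $\sigma+2i\gamma$, and insert the bounds above, keeping only $\rho$ in the middle term. When $\chi^2\ne\chi_0$ the function $L(\cdot,\chi^2)$ is entire, and one is left with $0\le\frac3{\sigma-1}-\frac4{\sigma-\beta}+B'\log(q(|\gamma|+2))+O(1)$; taking $\sigma=1+\delta/\log(q(|\gamma|+2))$ with a small absolute $\delta$ forces $\beta\le 1-c/\log(q(|\gamma|+2))$. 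If $\chi$ is real but $2|\gamma|\ge\sigma-1$, the extra term $\frac{\sigma-1}{(\sigma-1)^2+4\gamma^2}$ coming from $\chi^2=\chi_0$ is $\le\frac1{2(\sigma-1)}$, so the same optimisation survives. A zero of multiplicity $\ge2$ contributes $-\frac2{\sigma-\beta}$ and is already excluded by comparing $-\frac{\zeta'}{\zeta}$ with $-\frac{L'}{L}(\cdot,\chi)$ alone via $(1+\chi(n))\ge0$; hence any zero remaining in the region is simple. The principal character is covered by the same inequality applied to $\zeta$, so no exceptional zero arises from $\chi_0$.

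It then remains to treat a real nonprincipal $\chi$ together with a zero $\beta+i\gamma$ with $|\gamma|$ small, in particular a genuine real zero; here the $3$-$4$-$1$ device fails because $\chi^2=\chi_0$ puts a pole essentially at the probing point, which is exactly why the conclusion is ``at most one'' rather than ``none''. I would argue in two steps. (a) No complex zero with tiny $|\gamma|$: since $\chi$ is real, $\bar\rho=\beta-i\gamma$ is also a zero, and retaining both $\rho,\bar\rho$ in the middle term adds $-\frac{4(\sigma-\beta)}{(\sigma-\beta)^2+4\gamma^2}$, which dominates the $\chi_0$-pole term $\frac{\sigma-1}{(\sigma-1)^2+4\gamma^2}$ once $\beta$ and $\sigma$ both lie within $O(1/\log q)$ of $1$, so the previous optimisation still rules the zero out unless $\gamma=0$. (b) Uniqueness: let $\chi_1\ne\chi_2$ be real nonprincipal characters modulo $q$ and set $\psi=\chi_1\chi_2$, again real nonprincipal; positivity of $(1+\chi_1(n))(1+\chi_2(n))$ gives
\[
-\frac{\zeta'}{\zeta}(\sigma)-\frac{L'}{L}(\sigma,\chi_1)-\frac{L'}{L}(\sigma,\chi_2)-\frac{L'}{L}(\sigma,\psi)\ge0\qquad(\sigma>1),
\]
with $L(\cdot,\chi_1),L(\cdot,\chi_2),L(\cdot,\psi)$ all entire. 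If $L(\beta_j,\chi_j)=0$ with $\beta_j>1-\eta/\log q$ for $j=1,2$, inserting the bounds and keeping $\beta_1,\beta_2$ yields $0\le\big(\tfrac1\delta-\tfrac2{\delta+\eta}+B''\big)\log q+O(1)$ at $\sigma=1+\delta/\log q$, which is negative for $\delta$, then $\eta$, small and $q$ large; the finitely many remaining small $q$ are absorbed into $c_0$. Combining (a) and (b) with the first part shows the only possible zero in the region is a single simple real zero of a single real nonprincipal character, which is the assertion.

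I expect steps (a) and (b) to be the main obstacle: they require introducing the conjugate zero and the four-fold product $\zeta(s)L(s,\chi_1)L(s,\chi_2)L(s,\chi_1\chi_2)$ and tracking the pole and zero contributions carefully, whereas the first part is essentially a one-variable optimisation once the partial-fraction bound is in hand. (Dirichlet's non-vanishing $L(1,\chi)\ne0$ for real $\chi$ is used tacitly, to guarantee that the relevant $L$-functions are regular and nonzero at $s=1$.)
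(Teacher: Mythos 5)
Your sketch is the classical Landau–Page argument, which is precisely the route taken by the reference the paper cites for this proposition, \cite[Theorem 11.3]{MV}; the paper itself gives no proof, only the citation, so the comparison is really between your sketch and \S 11.1–11.2 of \cite{MV}. The inputs you list (partial-fraction bound for $-\tfrac{L'}{L}$, the bound on $-\tfrac{\zeta'}{\zeta}$ near $1$, positivity of $3+4\cos\theta+\cos 2\theta$, positivity of $(1+\chi_1(n))(1+\chi_2(n))$) and the way you combine them are all correct and match that source.

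There is, however, one case your case-split misses, so the final sentence ``combining (a) and (b) with the first part shows the only possible zero in the region is a single simple real zero'' is not yet justified. Your first part excludes zeros of multiplicity $\ge 2$; step (a) excludes complex zeros of a real character; step (b) excludes two \emph{different} real nonprincipal characters $\chi_1\ne\chi_2$ each having a near-$1$ real zero. None of these addresses the possibility that a \emph{single} real nonprincipal $\chi$ has two \emph{distinct simple} real zeros $\beta_1<\beta_2$ both in the region, and step (b) cannot be specialized to $\chi_1=\chi_2$ because then $\psi=\chi_0$ is no longer entire, which is exactly the obstruction you flagged elsewhere. The missing estimate follows from the positivity you already use: $(1+\chi(n))\ge 0$ gives $-\tfrac{\zeta'}{\zeta}(\sigma)-\tfrac{L'}{L}(\sigma,\chi)\ge 0$ for $\sigma>1$, and keeping \emph{both} $\beta_1,\beta_2$ in the partial-fraction bound yields
\[
\frac{1}{\sigma-1}+B\log q+O(1)\ \ge\ \frac{1}{\sigma-\beta_1}+\frac{1}{\sigma-\beta_2},
\]
which at $\sigma=1+\delta/\log q$ with $\delta$ a suitable small absolute constant forces $\min(\beta_1,\beta_2)\le 1-c/\log q$. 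Adding this case makes the argument complete and brings it into line with \cite[\S 11.1–11.2]{MV}.
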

\begin{proof}
    See \cite[Theorem 11.3]{MV}.
\end{proof}

\begin{pro}
    Let $\chi$ be a nonprincipal character modulo $q$ and suppose that $\si>c_0/(2 \log q(\abs{t}+1))$. If $L(s,\chi)$ has no exceptional zero , or if $\beta_1$ is an exceptional zero of $L(s,\chi)$ but $\abs{s-\beta_1} \geq 1/ \log q$, then 
    \begin{align}
        \label{L-inverse estimate}
        \frac{1}{L(s,\chi)} \ll \log q(\abs{t}+1).
    \end{align}
    Alternatively, if $\beta_1$ is an exceptional zero of $L(s,\chi)$ and $\abs{s-\beta_1} \leq 1/ \log q$, then 
    \begin{align}
        \label{L estimate near beta}
        \abs{s-\beta_1} \ll \abs{L(s,\chi)} \ll \abs{s-\beta_1} (\log q)^2.
    \end{align}
\end{pro}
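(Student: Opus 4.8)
The plan is to deduce both estimates, in the classical manner, from the zero-free region of Proposition~\ref{pro:exceptional zero} together with the Hadamard factorisation of $L(s,\chi)$; this is \cite[Theorem~11.4]{MV}. For \eqref{L-inverse estimate}, the range $\si\ge 1+1/\log q(\abs{t}+1)$ is immediate from the Euler product, since there $1/\abs{L(s,\chi)}\le\zeta(\si)\ll\log q(\abs{t}+1)$, so it suffices to treat $1-\tfrac{c_0}{2\log q(\abs{t}+1)}\le\si\le 1+\tfrac{1}{\log q(\abs{t}+1)}$. On that strip I would start from the standard truncated formula
\[
\frac{L'}{L}(s,\chi)=\sum_{\abs{\gamma-t}\le 1}\frac{1}{s-\rho}+\bigO{\log q(\abs{t}+1)},
\]
with $\rho=\be+i\gamma$ running over the zeros of $L(s,\chi)$, together with the zero-count $\#\{\rho:\abs{\gamma-t}\le1\}\ll\log q(\abs{t}+1)$, and integrate $L'/L$ along the horizontal segment from $s$ to $s_1:=1+\tfrac1{\log q(\abs{t}+1)}+it$, where $\log L(s_1,\chi)=\bigO{\log\log q(\abs{t}+1)}$ is read off from the Dirichlet series of $\log L$. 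By Proposition~\ref{pro:exceptional zero}, every non-exceptional $\rho$ in the sum satisfies $\re s-\be\gg 1/\log q(\abs{t}+1)$; if the exceptional zero $\be_1$ also occurs, the hypothesis $\abs{s-\be_1}\ge 1/\log q$ makes the extra term $1/(s-\be_1)=\bigO{\log q}$ harmless.

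Taking real parts, $-\log\abs{L(s,\chi)}=-\log\abs{L(s_1,\chi)}+\int_{s}^{s_1}\re\frac{L'}{L}(w,\chi)\,dw$, and each zero contributes $\int\re\frac1{w-\rho}\,dw=\tfrac12\log\frac{\abs{s_1-\rho}^2}{\abs{s-\rho}^2}=\bigO{\log\log q(\abs{t}+1)}$ to the integral. The main obstacle is that bounding the integral by the product of this with the $\ll\log q(\abs{t}+1)$ zeros is far too lossy---it yields only $1/\abs{L(s,\chi)}\ll\exp\!\big(C\log q(\abs{t}+1)\log\log q(\abs{t}+1)\big)$---so the sharp bound forces one to exploit the zero-free region more carefully, the crucial point being that the zeros lying within $\asymp 1/\log q(\abs{t}+1)$ of \emph{both} the line $\si=1$ and the height $t$ are $\bigO{1}$ in number. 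This one sees by comparing $\re\frac{L'}{L}$ at $s$ and at $s_1$: each such zero contributes $\gg\log q(\abs{t}+1)$ to the positive-term sum $\sum_{\abs{\gamma-t}\le1}\re\frac1{s_1-\rho}$, whereas that sum is $\ll\log q(\abs{t}+1)$ because $\re\frac{L'}{L}(s_1,\chi)\le-\tfrac{\zeta'}{\zeta}(\re s_1)\ll\log q(\abs{t}+1)$. Separating these ``dangerous'' zeros from the rest (the latter grouped dyadically according to $1-\be$ and to $\abs{\gamma-t}$) and estimating in this refined way yields $\log\abs{L(s,\chi)}\gg-\log\log q(\abs{t}+1)$, i.e.\ \eqref{L-inverse estimate}.

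For \eqref{L estimate near beta} I would isolate the exceptional-zero factor. Proposition~\ref{pro:exceptional zero} shows that $\be_1$ is the only zero of $L(s,\chi_1)$ within distance $\asymp 1/\log q$ of $\be_1$, so $L(s,\chi_1)=(s-\be_1)G(s)$ with $G$ analytic and non-vanishing on that neighbourhood; equivalently $\frac{L'}{L}(s,\chi_1)-\frac1{s-\be_1}=\sum_{\rho\ne\be_1,\ \abs{\gamma-t}\le1}\frac1{s-\rho}+\bigO{\log q}$, and since the remaining $\rho$ satisfy $\abs{s-\rho}\gg1/\log q$ and number $\ll\log q$, this difference is $\bigO{(\log q)^2}$ throughout the disk $\abs{s-\be_1}\le1/\log q$. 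Integrating it shows $\abs{G(s)}\asymp\abs{G(\be_1)}=\abs{L'(\be_1,\chi_1)}$ there; a Cauchy estimate applied to the elementary bound $\abs{L(w,\chi_1)}\ll\log q$ near $\be_1$ gives $\abs{L'(\be_1,\chi_1)}\ll(\log q)^2$, while $\abs{L'(\be_1,\chi_1)}\gg1$ follows from $G$ being zero-free on a disk of radius $\gg1/\log q$ about $\be_1$. Combining these yields $\abs{s-\be_1}\ll\abs{L(s,\chi_1)}\ll\abs{s-\be_1}(\log q)^2$, which is \eqref{L estimate near beta}.
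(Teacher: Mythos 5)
The paper gives no argument for this statement; the ``proof'' is the single line ``See \cite[Theorem 11.4]{MV}'', which you correctly identify at the outset. Your sketch for \eqref{L-inverse estimate} is in the spirit of the usual argument (integrate $L'/L$ horizontally to $1+\delta+it$ with $\delta=1/\log q(\abs{t}+1)$, use the truncated explicit formula, the zero-free region, and a dyadic zero count), and the observation that only $O(1)$ zeros lie within $\asymp\delta$ of both $\si=1$ and height $t$ is the right one; what is left implicit is the refined count $\#\{\rho:\abs{\gamma-t}\le h\}\ll1+h\log q(\abs{t}+1)$, without which the dyadic grouping does not close. Even so, the argument as written yields $1/\abs{L(s,\chi)}\ll(\log q(\abs{t}+1))^{O(1)}$ rather than exponent~$1$, though this is harmless for every use the paper makes of the bound.

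The argument for \eqref{L estimate near beta} has a genuine gap. You bound $G'/G=(L'/L)-1/(s-\beta_1)$ by $O((\log q)^2)$ on the disk $\abs{s-\beta_1}\le1/\log q$ and then assert that integrating gives $\abs{G(s)}\asymp\abs{G(\beta_1)}$. But integrating $O((\log q)^2)$ over a path of length $\le1/\log q$ only shows $\abs{\log G(s)-\log G(\beta_1)}\ll\log q$, i.e.\ $\abs{G(s)}/\abs{G(\beta_1)}\in[q^{-C},q^{C}]$, which is nowhere near $\asymp1$; even a dyadic refinement of the zero sum still loses a factor $(\log q)^{O(1)}$. The closing claim that $\abs{L'(\beta_1,\chi_1)}\gg1$ ``follows from $G$ being zero-free'' is also unjustified: zero-freeness of $G$ gives no lower bound on $\abs{G(\beta_1)}$, and the paper itself deduces $\abs{L'(\beta_1,\chi_1)}\gg1$ in the subsequent lemma \emph{from} \eqref{L estimate near beta}, so invoking it here would be circular. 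The upper bound in \eqref{L estimate near beta} can be obtained directly from $L(s,\chi_1)=\int_{\beta_1}^sL'(w,\chi_1)\,dw$ and a Cauchy estimate on $L'$, but the lower bound requires a different mechanism --- in \cite{MV} it comes from their Lemma~6.4, a Jensen/Borel--Carath\'eodory type inequality, combined with the uniqueness of $\beta_1$ in the zero-free region.
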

\begin{proof}
    See \cite[Theorem 11.4]{MV}.
\end{proof}

\begin{lem}
    Let $\beta_1$ be the exceptional zero corresponding to $\chi_1$. Then, the Laurent expansion of the function $1/L(s,\chi_1)$ at the point $s = \beta_1$,
    \begin{align*}
    \frac{1}{L(s,\chi_1)} &= \sum_{n=-1}^\infty P_n (s-\beta_1)^n,
\end{align*}
satisfies
\begin{align}
\label{Pn estimate}
    P_n &= O\left((\log q)^{n+1}\right).
\end{align}
\end{lem}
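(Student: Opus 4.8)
The plan is to extract the coefficients $P_n$ from Cauchy's integral formula on a circle of radius $1/\log q$ centred at $\beta_1$, using the two preceding propositions to bound $1/L(s,\chi_1)$ on that circle; the radius $1/\log q$ is chosen precisely so that the factor $r^{-n}$ produced by the estimate becomes $(\log q)^n$.

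First I would record the structural facts. By Proposition~\ref{pro:exceptional zero}, $\chi_1$ is a nonprincipal real character and $\beta_1$ is a \emph{simple} real zero of $L(s,\chi_1)$; since $\beta_1$ is real and lies in the region of that proposition, $\beta_1\ge 1-c_0/\log q$ (while $\beta_1<1$ because $L(1,\chi_1)\neq 0$). In particular $1/L(s,\chi_1)$ has a simple pole at $s=\beta_1$, which explains why the Laurent expansion begins at $n=-1$. Next I would check that the punctured disc $0<\abs{s-\beta_1}\le 1/\log q$ contains no zero of $L(s,\chi_1)$: since $\beta_1$ is exponentially close to $1$, every $s$ in this disc has $\sigma\ge\beta_1-1/\log q\ge 1-(c_0+1)/\log q$, which for $q$ larger than an absolute constant exceeds $c_0/(2\log q(\abs{t}+1))$, so the lower bound $\abs{s-\beta_1}\ll\abs{L(s,\chi_1)}$ in \eqref{L estimate near beta} applies and shows $L(s,\chi_1)\neq 0$ there. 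Hence $1/L(s,\chi_1)$ is holomorphic on this punctured disc, the displayed Laurent series converges on it, and for each $n\ge -1$,
\[
P_n=\frac{1}{2\pi i}\oint_{\abs{s-\beta_1}=r}\frac{ds}{L(s,\chi_1)\,(s-\beta_1)^{n+1}},\qquad r:=\frac{1}{\log q}.
\]

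Then I would estimate this integral trivially. On the circle $\abs{s-\beta_1}=r$ we have $\abs{t}\le r\le 1$, so $\abs{t}+1\le 2$, and either \eqref{L-inverse estimate} or the lower bound in \eqref{L estimate near beta} gives $1/\abs{L(s,\chi_1)}\ll\log q(\abs{t}+1)\ll\log q$. Therefore
\[
\abs{P_n}\le r\cdot\frac{1}{r^{n+1}}\,\max_{\abs{s-\beta_1}=r}\frac{1}{\abs{L(s,\chi_1)}}\ll\frac{\log q}{r^{n}}=(\log q)^{n+1},
\]
which is \eqref{Pn estimate}; for the boundedly many moduli excluded by the condition on $q$ above, $1/L(s,\chi_1)$ is a fixed meromorphic function whose Laurent coefficients at $\beta_1$ form a fixed sequence, so the implied constant can be enlarged to cover them.

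I expect the only mildly delicate point to be the middle step: confirming that the side condition $\sigma>c_0/(2\log q(\abs{t}+1))$ of the quoted propositions genuinely holds on a disc of radius $1/\log q$ about $\beta_1$ (which rests on $1-\beta_1\le c_0/\log q$), so that \eqref{L estimate near beta} is available there and the disc is truly zero-free. Everything after that is the standard Cauchy estimate with the radius calibrated to $1/\log q$.
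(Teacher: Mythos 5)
Your proof is correct and follows essentially the same route as the paper: both extract $P_n$ via Cauchy's formula on a circle of radius $\asymp 1/\log q$ about $\beta_1$ and bound $1/\abs{L(s,\chi_1)}$ there by $O(\log q)$ using the quoted estimates \eqref{L-inverse estimate}--\eqref{L estimate near beta}. The only cosmetic difference is that the paper separates the case $n=-1$ (computing $P_{-1}=1/L'(\beta_1,\chi_1)$ directly), whereas you correctly observe the Cauchy estimate covers $n=-1$ uniformly; you also spell out the verification that the disc is zero-free and that the side condition on $\sigma$ holds, which the paper leaves implicit.
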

\begin{proof}
Since $\beta_1$ is a simple pole of $1/L(s,\chi_1)$, we have
\begin{align*}
    \frac{1}{L(s,\chi_1)} &= \sum_{n=-1}^\infty P_n (s-\beta_1)^n.
\end{align*}
For $n \geq 0$, $P_n$ is given by
\begin{align*}
    \frac{1}{2\pi i} \int_{\mathcal{C}} \frac{ds}{L(s,\chi_1)(s-\beta_1)^{n+1}}.
\end{align*}
Here the contour $\mathcal{C}$ is a positively oriented circle of radius $R = c_2/ \log(2q)$ and centered at 
$\beta_1$, where $c_2 < c_0/2$ is sufficiently small. By using (\ref{L estimate near beta}), we get 
\begin{align*}
    P_n & \leq  \frac{1}{2\pi } \int_{0}^{2\pi} \frac{Rd \theta}{\abs{Re^{i \theta}} \abs{Re^{i\theta}}^{n+1}} \leq (\log q)^{n+1}.
\end{align*}
Otherwise for $n=-1$, we have $P_{-1}= 1/L^\prime(\beta_1, \chi_1)$. Since 
\begin{align*}
    \frac{1}{L^\prime (s,\chi_1)} &= \lim_{s \to \beta_1} \frac{s-\beta_1}{L(s,\chi_1)},
\end{align*}
by using (\ref{L estimate near beta}) we obtain $P_{-1} \ll 1$. Hence (\ref{Pn estimate}) is also valid for the case $n=-1$.
\end{proof}

\begin{lem}[Character orthogonality]
\label{lem:Character orthogonality}
     Let $\mathfrak{a}, \mathfrak{b} \in \{0,1\}$. For $n_1, n_2$ integers coprime to $q$, we have
     \begin{align*}
     &\sum_{\substack{\chi \neq \chi_0 \\ \chi(-1) =(-1)^{\mathfrak{a}}}} \frac{\chi(n_1)\overline{\chi}(n_2)}{{f_\chi}^{\mathfrak{b}}} \\
     &=\frac{1}{2} \sum_{\substack{d\mid q \\ d>1}}\frac{1}{d^{\mathfrak{b}}} \left( \sum_{\substack{l \mid d \\ n_1 \equiv n_2 \pmod l}}\varphi(l)\mu(d/l)+(-1)^{\mathfrak{a}} \sum_{\substack{l \mid d \\ n_1 \equiv -n_2 \pmod l}}\varphi(l)\mu(d/l) \right).
     \end{align*}
     The sum on the left-hand side vanishes if $(n_1n_2, q)\neq 1$.
\end{lem}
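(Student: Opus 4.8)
The plan is to start from the standard orthogonality relation for Dirichlet characters modulo $q$ and then separate out the even characters using the parity projector $\frac{1}{2}(1+\chi(-1))$, while simultaneously keeping track of the conductor weight $f_\chi^{-\mathfrak{b}}$. First I would write, for $n_1,n_2$ coprime to $q$,
\begin{align*}
\sum_{\substack{\chi \bmod q \\ \chi(-1)=(-1)^{\mathfrak a}}} \frac{\chi(n_1)\overline\chi(n_2)}{f_\chi^{\mathfrak b}}
= \frac12 \sum_{\chi \bmod q} \frac{\chi(n_1)\overline\chi(n_2) + (-1)^{\mathfrak a}\chi(-n_1)\overline\chi(n_2)}{f_\chi^{\mathfrak b}},
\end{align*}
and then subtract the principal-character term (which contributes $\frac12(1+(-1)^{\mathfrak a})$, but since we want $\chi\neq\chi_0$ we remove it — I should be careful that in the stated form the $\chi_0$ term is absorbed into the $d=1$ term that is excluded by the condition $d>1$). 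The core identity to establish is therefore a conductor-weighted orthogonality: for $m$ coprime to $q$,
\begin{align*}
\sum_{\chi \bmod q} \frac{\chi(m)}{f_\chi^{\mathfrak b}} = \sum_{d \mid q} \frac{1}{d^{\mathfrak b}} \sum_{\substack{l \mid d \\ m \equiv 1 \pmod l}} \varphi(l)\,\mu(d/l).
\end{align*}

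To prove this core identity I would group the characters $\chi$ modulo $q$ by their conductor $d$: every such $\chi$ is induced by a unique primitive character $\chi^*$ modulo $d$ for some $d \mid q$, and $\chi(m)=\chi^*(m)$ since $(m,q)=1$. Thus the left side equals $\sum_{d\mid q} d^{-\mathfrak b}\sum_{\chi^* \text{ primitive} \bmod d}\chi^*(m)$. Now I invoke Möbius inversion on primitivity: the number-theoretic identity $\sum_{\chi^* \text{ prim} \bmod d}\chi^*(m) = \sum_{l \mid d}\mu(d/l)\sum_{\psi \bmod l}\psi(m)$, combined with the ordinary orthogonality $\sum_{\psi \bmod l}\psi(m)=\varphi(l)\mathds{1}_{m\equiv 1\ (l)}$, yields exactly $\sum_{l\mid d}\mu(d/l)\varphi(l)\mathds{1}_{m\equiv 1 \pmod l}$. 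Substituting this back and then applying the result with $m \equiv n_1\overline{n_2}$ (interpreting the congruence $n_1\overline{n_2}\equiv 1 \pmod l$ as $n_1\equiv n_2\pmod l$, valid since $(n_2,l)=1$) and with $m\equiv -n_1\overline{n_2}$ for the second piece gives the claimed formula; the $d=1$ term is $\mathds{1}_{n_1\equiv n_2}+(-1)^{\mathfrak a}\mathds{1}_{n_1\equiv -n_2}$, which is precisely the principal character's contribution and is removed by restricting to $d>1$ and $\chi\neq\chi_0$. Finally, if $(n_1 n_2,q)\neq 1$ then every $\chi(n_1)\overline\chi(n_2)$ with $\chi\neq\chi_0$ vanishes, so the sum is $0$.

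The only mildly delicate point — the main obstacle, such as it is — is the bookkeeping of the primitivity Möbius inversion together with the conductor weight and the simultaneous handling of the two congruence conditions $n_1\equiv \pm n_2$; once the decomposition $\chi \leftrightarrow (\text{conductor } d, \text{primitive } \chi^* \bmod d)$ is set up cleanly, everything reduces to standard orthogonality and the finite Möbius sum over $l\mid d$. I expect no analytic input whatsoever: this is a purely combinatorial/algebraic identity about Dirichlet characters.
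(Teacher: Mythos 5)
Your proof is correct and follows essentially the same route as the paper: both decompose the non-principal characters mod $q$ by conductor $d\mid q$, use the fact that $\chi(n_1)\overline{\chi}(n_2)$ equals the corresponding primitive character's value when $(n_1n_2,q)=1$, and then apply M\"obius inversion over $l\mid d$ together with the standard orthogonality $\sum_{\psi\bmod l}\psi(m)=\varphi(l)\mathds{1}_{m\equiv 1\ (l)}$. The only cosmetic difference is the order of operations: you insert the parity projector $\frac12(1+(-1)^{\mathfrak a}\chi(-1))$ at the outset and work with the single argument $m=n_1\overline{n_2}$, whereas the paper first proves the unrestricted identity $\sum_{\chi\neq\chi_0}\chi(n_1)\overline{\chi}(n_2)f_\chi^{-\mathfrak b}=\sum_{d\mid q,\,d>1}d^{-\mathfrak b}\sum_{l\mid d,\,n_1\equiv n_2\ (l)}\varphi(l)\mu(d/l)$ and only at the end replaces $n_2\mapsto-n_2$ and averages; the two are equivalent since $S(n_1,-n_2)=\sum_\chi\chi(-1)\chi(n_1)\overline{\chi}(n_2)f_\chi^{-\mathfrak b}$.
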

\begin{proof}
This lemma can be proved in a standard way similar to Lemma 1 in \cite{So}. Let $A_{q,d}$ be the set of all Dirichlet characters modulo $q$ whose conductor is $d$, and $B_{d}$ be the set of all primitive Dirichlet characters with conductor $d$. For $n_1, n_2$ integers coprime to $q$, it can be rewritten as 
    \begin{align}
    \label{Character sum}
    \sum_{\chi \neq \chi_0} \frac{\chi(n_1)\overline{\chi}(n_2)}{{f_\chi}^{\mathfrak{b}}} = \sum_{\substack{d\mid q \\ d>1}} \frac{1}{d^{\mathfrak{b}}} \sum_{\chi \in A_{q,d}}\chi(n_1)\overline{\chi}(n_2)=\sum_{\substack{d\mid q \\ d>1}} \frac{1}{d^{\mathfrak{b}}} \sum_{\chi \in B_{d}}\chi(n_1)\overline{\chi}(n_2).
    \end{align}
Let $h_{n_1,n_2}(l) := \sum_{\chi \in B_{l}} \chi(n_1)\overline{\chi}(n_2)$. Since
    \[
    \sum_{l \mid d} h_{n_1,n_2}(l) = \sum_{\chi \pmod d} \chi(n_1)\overline{\chi}(n_2) = \begin{cases}
        \varphi(d) & n_1 \equiv n_2 \pmod d, \\
        0 & otherwise,
    \end{cases} 
    \]
M\"obius inversion implies that
    \[
    \sum_{\chi \in B_d} \chi(n_1)\overline{\chi}(n_2) = \sum_{\substack{l \mid d\\n_1 \equiv n_2 \pmod l}} \varphi(l)\mu(d/l).
    \]
Hence combining (\ref{Character sum}), we have
    \begin{align*}
    \sum_{\chi \neq \chi_0} \frac{\chi(n_1)\overline{\chi}(n_2)}{{f_\chi}^{\mathfrak{b}}} = \sum_{\substack{d\mid q \\ d>1}} \frac{1}{d^{\mathfrak{b}}} \sum_{\substack{l \mid d\\n_1 \equiv n_2 \pmod l}} \varphi(l)\mu(d/l).
    \end{align*}
Replacing $n_2$ with $-n_2$, we get the desired result.
    \end{proof}

\begin{lem}
\label{lemma weighted sum}
    Let $m, n$ and $q$ be positive integers. For any $l \in \mathbb{N}$ and $X > 1$, we have
    \begin{align*}
        &\sum_{\substack{m,n=1 \\ (mn,q)=1 \\ m\equiv \pm n \pmod l}} \frac{\mu(m)\mu(n)}{mn}e^{-\frac{X}{mn}} &= \frac{\zeta(2)}{\zeta(4)}\prod_{p \mid q} \left( 1+\frac{1}{p^2}\right)^{-1}+O\left( X^{-\frac{1}{2}}\right) + O\left( \frac{(\log X)^2}{l}\right).
    \end{align*}
\end{lem}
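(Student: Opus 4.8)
The plan is to separate the diagonal term $m=n$, which carries the main term, from the off-diagonal terms $m\ne n$, where the congruence $m\equiv\pm n\pmod l$ makes the sum genuinely sparse. (I read the smoothing factor in the form $e^{-mn/X}$, so that pairs with $mn\gg X$ are damped; the only properties used will be $e^{-t}\le 1$, $1-e^{-t}\le\min(1,t)$, and the rapid decay of $e^{-t}$.)

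For the diagonal, $m\equiv m\pmod l$ holds for free and $\mu(m)^2$ is the squarefree indicator, so this contribution is
\[
\sum_{\substack{m\ge 1\\(m,q)=1}}\frac{\mu(m)^2}{m^2}e^{-m^2/X}
=\sum_{\substack{m\ge 1\\(m,q)=1}}\frac{\mu(m)^2}{m^2}
-\sum_{\substack{m\ge 1\\(m,q)=1}}\frac{\mu(m)^2}{m^2}\bigl(1-e^{-m^2/X}\bigr).
\]
The first sum is the Euler product $\prod_{p\nmid q}\bigl(1+p^{-2}\bigr)=\frac{\zeta(2)}{\zeta(4)}\prod_{p\mid q}\bigl(1+p^{-2}\bigr)^{-1}$, which is the asserted main term. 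In the second sum, $1-e^{-t}\le\min(1,t)$ gives the bound $\frac1X\sum_{m\le\sqrt X}1+\sum_{m>\sqrt X}\frac{1}{m^2}\ll X^{-1/2}$, the first error term.

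For the off-diagonal I would discard the coprimality condition and the signs of $\mu$, so that it suffices to bound $\sum\frac{1}{mn}e^{-mn/X}$ over $m\ne n$ with $m\equiv n\pmod l$ or $m\equiv-n\pmod l$. In the first family write $m=n+jl$, $j\ge 1$ (the case $j\le-1$ is symmetric); since $n(n+jl)\ge njl$, both in the denominator and in the exponent, the sum is
\[
\ll\sum_{n,j\ge 1}\frac{1}{njl}\,e^{-njl/X}
=\frac1l\sum_{N\ge 1}\frac{d(N)}{N}\,e^{-Nl/X}\ll\frac{(\log X)^2}{l},
\]
using $\sum_{N\le Y}d(N)/N\ll(\log Y)^2$ and the exponential cutoff at $N\asymp X/l$. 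In the second family write $m+n=jl$, $j\ge 1$; for fixed $j$ the inner sum is symmetric in $m\leftrightarrow jl-m$ and bounded by $\frac{1}{jl}\sum_{m\le jl/2}\frac1m e^{-mjl/(2X)}$, and summing over $j$, with the exponential supplying decay once $jl\gg X$, again gives $\ll(\log X)^2/l$. The overlap of the two congruence classes (pairs $m=n$ with $l\mid 2m$) contributes $O(1/l^2)$ and is absorbed.

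The step I expect to be the real obstacle is the off-diagonal estimate once $mn>X$: the crude bound $e^{-mn/X}\le X/(mn)$ yields only $O(\log X)$, which is useless when $l$ is large, so one has to exploit the full exponential decay together with the arithmetic factor $1/(jl)$ forced by the congruence, as indicated above. Combining the diagonal main term with the $O(X^{-1/2})$ and $O\bigl((\log X)^2/l\bigr)$ errors then establishes the lemma.
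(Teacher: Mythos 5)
Your proposal is correct and follows the same overall strategy as the paper: the diagonal $m=n$ supplies the main term with a tail error $O(X^{-1/2})$, and the off-diagonal is split into the two congruence families $m\equiv n\pmod l$ and $m\equiv -n\pmod l$, each bounded by $O\bigl((\log X)^2/l\bigr)$. You also correctly read the smoothing factor as $e^{-mn/X}$, which is what the Mellin unfolding in Lemma~\ref{lem:S(X)} actually produces; the $e^{-X/(mn)}$ appearing in the displayed statement is a typo.

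Where you diverge is in the treatment of the off-diagonal pieces, and your version is arguably cleaner. For $m\equiv n\pmod l$ the paper simply invokes the estimate from the Matsumoto--Saad Eddin paper; your direct argument, substituting $m=n+jl$, using $n(n+jl)\ge njl$ in both the denominator and the exponent, and recognizing $\frac1l\sum_N d(N)N^{-1}e^{-Nl/X}\ll (\log X)^2/l$, is self-contained and shorter. For $m\equiv -n\pmod l$ the paper parametrizes $n=hl-m$, factors $e^{-mn/X}=e^{m^2/X}e^{-mhl/X}$, and compares with integrals, splitting at $t=X/(ml)$; you instead group by $j=(m+n)/l$, exploit the $m\leftrightarrow jl-m$ symmetry, and bound by $\frac{1}{jl}\sum_m m^{-1}e^{-mjl/(2X)}$ before summing over $j$. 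Both yield $(\log X)^2/l$, but your sum-based bookkeeping avoids the paper's somewhat delicate comparison with integrals (where, for instance, the integrand $1/(t-m/l)$ near $t=2m/l$ has to be handled carefully). One small remark: the $O(1/l^2)$ you assign to the "overlap" $m=n$, $l\mid 2m$ is accurate, but note that in the application (Lemma~\ref{lem:Character orthogonality} with $\mathfrak{a}=0$) the two congruence conditions are \emph{added}, not unioned, so that extra diagonal piece is added in rather than subtracted out; either way it is absorbed into $O\bigl((\log X)^2/l\bigr)$, so no harm done.
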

\begin{proof}
First, we calculate the diagonal contribution:
\begin{align*}
    \sum_{\substack{m=1 \\ (m,q)=1}}^\infty \frac{\mu(m)^2}{m^2}e^{-\frac{m^2}{X}} &= \sum_{\substack{m \leq X^{\frac{1}{2}} \\ (m,q)=1}} \frac{\mu(m)^2}{m^2}e^{-\frac{m^2}{X}} +\sum_{\substack{m>X^{\frac{1}{2}} \\ (m,q)=1}} \frac{\mu(m)^2}{m^2}e^{-\frac{m^2}{X}}.
\end{align*}
If $m >X^{\frac{1}{2}}$, then $e^{-\frac{m^2}{X}} \leq 1$. Therefore, the second sum on the right hand side in the above can be estimated as $X^{-\frac{1}{2}}$. Otherwise, by using $e^{-\frac{m^2}{X}}= 1+O\left( m^2/X\right)$, we find that the first sum on the right hand side is
\begin{align}
\label{diagonal contribution}
\sum_{\substack{m=1 \\ (m,q)=1}}^\infty \frac{\mu(m)^2}{m^2}+O\left( X^{-\frac{1}{2}}\right) = \frac{\zeta(2)}{\zeta(4)}\prod_{p \mid q}\left( 1+\frac{1}{p^2}\right)^{-1}+O\left( X^{-\frac{1}{2}}\right).
\end{align}

By the same manner as in~\cite{MS}, we can calculate the contribution which comes from $m \neq n$ and from $m \equiv n \pmod l$. In fact, we have
\begin{align}
\label{non-diagonal contribution 1}
    \sum_{\substack{m,n=1 \\ (mn,q)=1 \\ m \neq n \\ m \equiv n \pmod l}}^\infty \frac{\mu(m)\mu(n)}{mn}e^{-\frac{mn}{X}} &\ll \frac{(\log X)^2}{l}.
\end{align}

The point different from \cite{MS} is that we further treat the nondiagonal terms with $m \neq n$ and $m \equiv -n \pmod l$. 
\begin{align*}
    \sum_{\substack{m,n=1 \\ (mn,q)=1 \\ m \neq n \\ m \equiv -n \pmod l}}^\infty \frac{\mu(m)\mu(n)}{mn}e^{-\frac{mn}{X}} &\ll \sum_{\substack{m<n \\ (mn,q)=1 \\ m \equiv -n \pmod l}} \frac{e^{-\frac{mn}{X}}}{mn} \\
    &= \sum_{m=1}^\infty \frac{e^\frac{m^2}{X}}{m} \sum_{h>\frac{2m}{l}} \frac{e^{-\frac{mhl}{X}}}{-m+hl} \\
    &\ll \frac{1}{l}\sum_{m=1}^\infty \frac{e^{\frac{m^2}{X}}}{m} \int_\frac{2m}{l}^\infty \frac{e^{-\frac{mtl}{X}}}{t-\frac{m}{l}} dt \\
    &= \frac{1}{l}\sum_{m=1}^\infty \frac{e^{\frac{m^2}{X}}}{m} \left(\int_\frac{2m}{l}^\frac{X}{ml}+\int_\frac{X}{ml}^\infty \right) \frac{e^{-\frac{mtl}{X}}}{t-\frac{m}{l}} dt.
\end{align*}
We remark that if $\frac{2m}{l}<\frac{X}{ml}$, then $m \leq \sqrt{\frac{X}{2}}$. Hence the above is equivalent to  
\begin{align*}
    \frac{1}{l}\sum_{m\leq \sqrt{\frac{X}{2}}} \frac{e^{\frac{m^2}{X}}}{m} \left(\int_\frac{2m}{l}^\frac{X}{ml}+\int_\frac{X}{ml}^\infty \right) \frac{e^{-\frac{mtl}{X}}}{t-\frac{m}{l}} dt+\frac{1}{l}\sum_{m>\sqrt{\frac{X}{2}}} \frac{e^{\frac{m^2}{X}}}{m} \int_\frac{2m}{l}^\infty \frac{e^{-\frac{mtl}{X}}}{t-\frac{m}{l}} dt.
\end{align*}
Since $t-\frac{m}{l} \geq \frac{m}{l}$ for $t \geq \frac{2m}{l}$, the last term is
\begin{align*}
    \frac{1}{l}\sum_{m>\sqrt{\frac{X}{2}}} \frac{e^{\frac{m^2}{X}}}{m} \int_\frac{2m}{l}^\infty \frac{e^{-\frac{mtl}{X}}}{t+\frac{m}{l}} dt &\ll \sum_{m>\sqrt{\frac{X}{2}}} \frac{e^{\frac{m^2}{X}}}{m^2} \int_\frac{2m}{l}^\infty e^{-\frac{mtl}{X}} dt \\
    &\ll \frac{X}{l} \sum_{m>\sqrt{\frac{X}{2}}} \frac{e^{-\frac{m^2}{X}}}{m^3} \\
    &\ll \frac{1}{l}.
\end{align*}
By using the fact $e^{-mtl/X}\leq e^{-2m^2/X}$ for $t \geq \frac{2m}{l}$ and replacing $\frac{mtl}{X}=v$, we have
\begin{align*}
    &\frac{1}{l}\sum_{m\leq \sqrt{\frac{X}{2}}} \frac{e^{\frac{m^2}{X}}}{m} \left(\int_\frac{2m}{l}^\frac{X}{ml}+\int_\frac{X}{ml}^\infty \right) \frac{e^{-\frac{mtl}{X}}}{t-\frac{m}{l}} dt \\
    &\ll \frac{1}{l}\sum_{m\leq \sqrt{\frac{X}{2}}} \frac{e^{\frac{m^2}{X}}}{m} \int_\frac{2m}{l}^\frac{X}{ml}\frac{e^{-\frac{2m^2}{X}}}{t-\frac{m}{l}} dt+\frac{1}{l}\sum_{m\leq \sqrt{\frac{X}{2}}} \frac{e^{\frac{m^2}{X}}}{m} \int_1^\infty \frac{e^{-v}}{v-\frac{m^2}{X}} dv \\
    &\ll \frac{1}{l}\sum_{m\leq \sqrt{\frac{X}{2}}} \frac{e^{-\frac{m^2}{X}}}{m} \int_\frac{2m}{l}^\frac{X}{ml}\frac{dt}{t-\frac{m}{l}} +\frac{1}{l}\sum_{m\leq \sqrt{\frac{X}{2}}} \frac{e^{\frac{m^2}{X}}}{m} \int_1^\infty \frac{e^{-v}}{v-\frac{1}{2}} dv \\
    &\ll \frac{1}{l}\sum_{m\leq \sqrt{\frac{X}{2}}} \frac{e^{-\frac{m^2}{X}}}{m} \log \left( \frac{X}{m}+m\right) + \frac{1}{l}\sum_{m\leq \sqrt{\frac{X}{2}}} \frac{e^{\frac{m^2}{X}}}{m} \\
    &\ll \frac{(\log X)^2}{l}.
\end{align*}

Therefore, we have
\begin{align}
\label{non-diagonal contribution 2}
\sum_{\substack{m,n=1 \\ (mn,q)=1 \\ m \neq n \\ m \equiv -n \pmod l}}^\infty \frac{\mu(m)\mu(n)}{mn}e^{-\frac{mn}{X}} &\ll \frac{(\log X)^2}{l}. 
\end{align}
Combining (\ref{diagonal contribution}), (\ref{non-diagonal contribution 1}) and (\ref{non-diagonal contribution 2}), we obtain the desired result.
\end{proof}

\begin{lem}
\label{lem:S(X)}
    Let $\mathfrak{b} \in \{0,1 \}$. Let
\begin{align*}
    G_\mathfrak{b}(s):= \sum_{\substack{\chi\neq \chi_0 \\ \chi(-1)=1}} \frac{1}{{f_\chi}^{\mathfrak{b}} L(s,\chi)L(s,\overline{\chi})},
\end{align*}
and 
\begin{align}
\label{S(X)}
    S_\mathfrak{b}(X):= \frac{1}{2\pi i}\int_{3-i \infty}^{3+i \infty} G_\mathfrak{b}(s)X^{s-1}\Gamma(s-1)ds.
\end{align}
Then we have
\begin{align*}
    S_\mathfrak{b}(X) = &\frac{\zeta(2)}{2\zeta(4)}\prod_{p \mid q} \left( 1+\frac{1}{p^2}\right)^{-1} \sum_{\substack{d \mid q \\ d>1}} \frac{\varphi^*(d)}{d^{\mathfrak{b}}} \\
    &\quad+O\left( X^{-\frac{1}{2}} \sum_{\substack{d \mid q \\ d>1}} \frac{1}{d^{\mathfrak{b}}}\sum_{l \mid d} \varphi(l) \right)+O\left((\log X)^2 \sum_{\substack{d \mid q \\ d>1}} \frac{1}{d^{\mathfrak{b}}}\sum_{l \mid d} \frac{\varphi(l)}{l}\right),
\end{align*}
where $\varphi^*(n)$ denotes the number of primitive characters modulo $n$.
\end{lem}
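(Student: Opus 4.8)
The plan is to expand the reciprocal $L$-functions into Dirichlet series on the line $\re(s)=3$, evaluate the resulting character sum by Lemma~\ref{lem:Character orthogonality}, collapse the contour integral into an exponentially smoothed sum via the Mellin transform of $e^{-y}$, and then invoke Lemma~\ref{lemma weighted sum}. First I would use, for $\re(s)>1$, the absolutely convergent expansion $1/L(s,\chi)=\sum_{m\ge 1}\mu(m)\chi(m)m^{-s}$, which gives $1/\bigl(L(s,\chi)L(s,\overline{\chi})\bigr)=\sum_{m,n\ge 1}\mu(m)\mu(n)\chi(m)\overline{\chi}(n)(mn)^{-s}$. Since there are only finitely many characters modulo $q$ and $\sum_{m,n}(mn)^{-3}<\infty$, one may interchange the finite sum over $\chi$ with the double sum and apply Lemma~\ref{lem:Character orthogonality} with $\mathfrak{a}=0$ (which simultaneously discards every term with $(mn,q)>1$), obtaining
\[
G_\mathfrak{b}(s)=\frac{1}{2}\sum_{\substack{d\mid q\\ d>1}}\frac{1}{d^{\mathfrak{b}}}\sum_{l\mid d}\varphi(l)\mu(d/l)\sum_{\pm}\,\sum_{\substack{m,n\ge 1\\ (mn,q)=1\\ m\equiv \pm n\pmod l}}\frac{\mu(m)\mu(n)}{(mn)^{s}}.
\]

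Next I would substitute this into \eqref{S(X)} and interchange the absolutely convergent double sum with the contour integral; since $G_\mathfrak{b}$ is bounded on $\re(s)=3$ and $\Gamma(s-1)$ decays exponentially along vertical lines, this is legitimate. Each $(m,n)$-term then contributes $\mu(m)\mu(n)\cdot\frac{1}{2\pi i}\int_{3-i\infty}^{3+i\infty}X^{s-1}(mn)^{-s}\Gamma(s-1)\,ds$, and after the substitution $w=s-1$ the classical identity $\frac{1}{2\pi i}\int_{c-i\infty}^{c+i\infty}\Gamma(w)Y^{-w}\,dw=e^{-Y}$ (valid for $c>0$, here with $Y=mn/X$) evaluates the integral to $(mn)^{-1}e^{-mn/X}$. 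Therefore
\[
S_\mathfrak{b}(X)=\frac{1}{2}\sum_{\substack{d\mid q\\ d>1}}\frac{1}{d^{\mathfrak{b}}}\sum_{l\mid d}\varphi(l)\mu(d/l)\sum_{\pm}\,\sum_{\substack{m,n\ge 1\\ (mn,q)=1\\ m\equiv \pm n\pmod l}}\frac{\mu(m)\mu(n)}{mn}\,e^{-mn/X}.
\]

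Then I would apply Lemma~\ref{lemma weighted sum}. The inner sum with $m\equiv n\pmod l$ equals $\frac{\zeta(2)}{\zeta(4)}\prod_{p\mid q}(1+p^{-2})^{-1}+O(X^{-1/2})+O\bigl((\log X)^{2}/l\bigr)$, whereas the inner sum with $m\equiv -n\pmod l$ contributes only $O(X^{-1/2})+O\bigl((\log X)^{2}/l\bigr)$: its would-be main term comes solely from the diagonal $m=n$ subject to $l\mid 2m$, which for $l\ge 3$ has size $O(l^{-2})$ and for $l\le 2$ is the $O(1)$ quantity $\frac{\zeta(2)}{\zeta(4)}\prod_{p\mid q}(1+p^{-2})^{-1}$, in either case absorbed into $O\bigl((\log X)^{2}/l\bigr)$. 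Inserting these estimates, using $\sum_{l\mid d}\varphi(l)\mu(d/l)=\varphi^*(d)$ (M\"obius inversion of $\sum_{l\mid d}\varphi^*(l)=\varphi(d)$, the identity already used in the proof of Lemma~\ref{lem:Character orthogonality}) to assemble the main term, and the bounds $\varphi(l)/l\le 1$, $\abs{\mu(d/l)}\le 1$ to collect the errors, I would arrive at the asserted identity.

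The one genuinely delicate point is bookkeeping the constant: one must verify that \emph{only} the sums with $m\equiv n\pmod l$ feed the main term, the sums with $m\equiv -n\pmod l$ being pure error, so that the factor $\tfrac12$ furnished by Lemma~\ref{lem:Character orthogonality} is precisely what turns $\zeta(2)/\zeta(4)$ into the stated coefficient $\zeta(2)/(2\zeta(4))$. The remaining ingredients --- justifying the interchange of summation and integration, evaluating the Mellin--Barnes integral, and crudely estimating the arithmetic error sums --- are routine.
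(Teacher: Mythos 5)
Your proposal follows essentially the same route as the paper's proof: expand $1/L$ into a Dirichlet series, apply Lemma~\ref{lem:Character orthogonality} with $\mathfrak{a}=0$, reduce the contour integral to the exponentially smoothed sum $e^{-mn/X}$ via Mellin inversion, and finish by invoking Lemma~\ref{lemma weighted sum}. Your explicit check that the $m\equiv -n\pmod l$ diagonal is absorbed into the $O\bigl((\log X)^2/l\bigr)$ error (so that only the $m\equiv n$ branch feeds the main term and the character-orthogonality factor $\tfrac12$ produces the coefficient $\zeta(2)/(2\zeta(4))$) is actually more careful than the paper's wording, which leaves this bookkeeping implicit.
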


\begin{proof}
By the well-known formula $e^{-y}=\frac{1}{2\pi i} \int_{2-i \infty}^{2+i \infty} y^s \Gamma(s)ds$, we have
\begin{align*}
    S_\mathfrak{b}(X) = \sum_{\substack{\chi\neq \chi_0 \\ \chi(-1)=1}} \frac{1}{{f_\chi}^{\mathfrak{b}}}\sum_{m=1}^\infty \sum_{n=1}^\infty \frac{\mu(m)\mu(n)\chi(m)\overline{\chi}(n)}{mn}e^{-\frac{X}{mn}}.
\end{align*}
Now we apply Lemma \ref{lem:Character orthogonality} with $\mathfrak{a}=0$, we have 
\begin{align*}
    S_\mathfrak{b}(X) = \frac{1}{2}\sum_{\substack{d \mid q \\ d>1}} \frac{1}{d^{\mathfrak{b}}}\sum_{l \mid d} \varphi(l)\mu(d/l)\sum_{\substack{m,n=1 \\ (mn,q)=1 \\ m=n \\ m\equiv \pm n \pmod l}}^\infty \sum_{n=1}^\infty \frac{\mu(m)\mu(n)}{mn}e^{-\frac{X}{mn}}.
\end{align*}
By applying Lemma \ref{lemma weighted sum}, we complete the proof.
\end{proof}

\section{Proof of Theorem \ref{thm:negative square moment}}
In this section, we prove Theorem \ref{thm:negative square moment}. We assume that $q$ is a positive integer throughout this section. We put $\mathfrak{b}=0$ in Lemma \ref{lem:S(X)} to obtain 
    \begin{align}
    \begin{split}
    \label{S_0(X)}
        S_0(X) &=\frac{\zeta(2)}{2\zeta(4)}\prod_{p \mid q} \left( 1+\frac{1}{p^2}\right)^{-1} \sum_{\substack{d \mid q \\ d>1}} \varphi^*(d) \\
    &\quad+O\left( X^{-\frac{1}{2}} \sum_{\substack{d \mid q \\ d>1}} \sum_{l \mid d} \varphi(l) \right)+O\left((\log X)^2 \sum_{\substack{d \mid q \\ d>1}} \sum_{l \mid d} \frac{\varphi(l)}{l}\right).
    \end{split}
    \end{align}

Now we estimate the integral in (\ref{S(X)}). Put
\[
g_\chi(s):= \frac{X^{s-1}\Gamma(s-1)}{L(s,\chi)L(s,\overline{\chi})}.
\]

Put $A(c_1) = 1 - c_1/ \log(q(T+1))$ with $0<c_1<c_0/2$ and shift the part $\abs{t} \leq T$ of the path of integration to the line segment $\si +it$ defined with $\si = A(c_1)$ and $\abs{t} \leq T$. Let $\mathcal{C}_T$ denote the closed contour that consists line segments joining the points $3-iT, 3+iT, A(c_1)+iT$ and $A(c_1)-iT$. If there is no exceptional zero, then we take $c_1=c_0/5$, where $c_0$ is the constant in Proposition \ref{pro:exceptional zero}. If there is an exceptional zero $\beta_1$ satisfying $\beta_1 < 1-c_0/(4\log (q(T+2)))$,  then we take $c_1=c_0/5$ as before. While, if there is an exceptional zero $\beta_1$ but $\beta_1 \geq 1-c_0/(4\log (q(T+2)))$, we take $c_1=c_0/3$. In the last case, $\mathcal{C}_T$ contains a simple pole at $s=1$ and a pole at $s=\beta_1$ of order $2$.  

If the exceptional zero $\beta_1$ with the associated character $\chi_1$ exists, then putting $u=s-\beta_1$, we write
\begin{align*}
    X^{s-1} = X^{\beta_1-1}\sum_{n=0}^\infty \frac{(\log X)^n}{n!} u^n
\end{align*}
and
\begin{align*}
    \Gamma(s-1) = \sum_{n=0}^\infty \frac{\Gamma^{(n)}(\beta_1-1)}{n!} u^n.
\end{align*}

Thus, $g_{\chi_1}(s)$ has 
\begin{align*}
    \operatorname{Res}( g_{\chi_1}(s) ; \beta_1) &= \lim_{s \to \beta_1} \frac{d}{ds} \left[ (s-\beta_1)^2 \frac{X^{s-1}\Gamma(s-1)}{L(s,\chi_1)^2}\right] \\
    &= X^{\beta_1-1} \lim_{u \to 0} \frac{d}{du} \left[ u^2 \frac{X^{u}\Gamma(u+\beta_1-1)}{L(u+\beta_1,\chi_1)^2}\right] \\
    &= X^{\beta_1-1} \lim_{u \to 0} \frac{d}{du} \left[ \sum_{n=0}^\infty R_n u^n\right] \\
    &= X^{\beta_1-1} R_1,
\end{align*}
where
\begin{align*}
    R_n &= \sum_{\substack{n_1,n_2,n_3 \geq 0 \\ n=n_1+n_2+n_3}} \left( \sum_{n_1= l_1+l_2} P_{l_1-1}P_{l_2-1}\right) \frac{(\log X)^{n_2}}{n_2!} \frac{\Gamma^{(n_3)}(\beta_1-1)}{n_3!}.
\end{align*}
By using the fact $\Gamma^{(m)}(\beta_1-1) \ll (1-\beta_1)^{-m-1}$, we have
\begin{align}
\label{residue at siegel zero}
    \operatorname{Res}( g_{\chi_1}(s) ; \beta_1) &\ll X^{\beta_1-1} \left(1-\beta_1\right)^{-1}\left( \log q+ \log X+ (1-\beta_1)^{-1}\right).
\end{align}

We apply the residue theorem to obtain
\begin{align*} 
    S_0(X) &= \sum_{\substack{\chi\neq \chi_0 \\ \chi(-1)=1}} \frac{1}{\abs{L(1,\chi)}^2} + \operatorname{Res}( g_{\chi_1}(s) ; \beta_1)\\
    &\quad+ \frac{1}{2\pi i} \left(\int_{3+iT}^{A(c_1)+iT} + \int_{A(c_1)+iT}^{A(c_1)-iT}+\int_{A(c_1)-iT}^{3-iT}\right)G_0(s)X^{s-1}\Gamma(s-1)ds \\
    &\quad+\frac{1}{2\pi i} \int_{\substack{\sigma=3 \\\abs{t} >T}} G_0(s)X^{s-1}\Gamma(s-1)ds.
\end{align*}
By (\ref{L-inverse estimate}) and the Stirling formula (see \cite[Theorem C.1]{MV})
\begin{align}
    \label{Stirling}
    \Gamma(\si+it)= \sqrt{2 \pi}(1+\abs{t})^{\si-\frac{1}{2}}e^{-\frac{\pi\abs{t}}{2}}(1+O(\abs{t}^{-1})),
\end{align}

we find that
\begin{align*}
    & \frac{1}{2\pi i} \int_{\substack{\sigma=3 \\\abs{t} >T}} G_0(s)X^{s-1}\Gamma(s-1)ds \\
    & \ll \varphi(q)X^2 \int_T^\infty \abs{\Gamma(2+it)} dt \\
    &\ll \varphi(q)X^2(T+1)^\frac{3}{2}e^{-\frac{\pi}{2}T},
\end{align*}
\begin{align*}
    & \frac{1}{2\pi i}\int_{A(c_1)+iT}^{A(c_1)-iT}G_0(s)X^{s-1}\Gamma(s-1)ds \\ 
    &\ll \varphi(q) (\log (q(T+1)))^2 X^{A(c_1)-1} \int_{A(c_1)-iT}^{A(c_1)+iT}\abs{\Gamma(s-1)}\abs{ds} \\
    &\ll \varphi(q) (\log (q(T+1)))^2 X^{A(c_1)-1},
\end{align*}
and 
\begin{align*}
    & \frac{1}{2\pi i}\int_{A(c_1) \pm iT}^{3 \pm iT}G_0(s)X^{s-1}\Gamma(s-1)ds \\ 
    &\ll \varphi(q) (\log (q(T+1)))^2 X^{-1} (1+T)^{-\frac{3}{2}} e^{-\frac{\pi T}{2}}\int_{A(c_1)}^{3} ((1+T)X)^{\si}d\si \\
    &\ll \varphi(q) (\log (q(T+1)))^2 \frac{X^{2} (1+T)^{\frac{3}{2}} e^{-\frac{\pi T}{2}}}{\log ((1+T)X)}.
\end{align*}
We now put $T=q$ and $X =\exp\left(\frac{4}{c_0} (\log q)^2 \right)$. Then we have
\begin{align}
\begin{split}
    \label{integral-|t|>T}
    \frac{1}{2\pi i} \int_{\substack{\sigma=3 \\\abs{t} >T}} G_0(s)X^{s-1}\Gamma(s-1)ds \ll \varphi(q) \exp \left( -\frac{\pi}{2}q\left( 1+O \left(\frac{(\log q)^2}{q}\right)\right)\right), 
\end{split}
\end{align}
\begin{align}
\label{integral-v}
    \frac{1}{2\pi i}\int_{A(c_1)+iT}^{A(c_1)-iT}G_0(s)X^{s-1}\Gamma(s-1)ds \ll \frac{\varphi(q)(\log q)^2}{q^2}, 
\end{align}
and 
\begin{align}
\begin{split}
\label{integral-h}
    &\frac{1}{2\pi i}\int_{A(c_1) \pm iT}^{3 \pm iT}G_0(s)X^{s-1}\Gamma(s-1)ds \\
    &\ll \varphi(q)(\log q) \exp \left(-\frac{\pi}{2}q\left( 1+O\left( \frac{\log q)^2}{q}\right)\right)\right).
    \end{split}
\end{align}
Also from (\ref{residue at siegel zero}), we find that the contribution from the exceptional zero is
\begin{align*}
    \operatorname{Res}( g_{\chi_1}(s) ; \beta_1) &\ll  \left(1-\beta_1\right)^{-1}\left( (\log q)^2+ (1-\beta_1)^{-1}\right).
\end{align*}

Finally, from the fact $n = \sum_{d \mid n} \varphi(d)$, we have
\begin{align}
\begin{split}
    \label{error-1}
    X^{-\frac{1}{2}} \sum_{\substack{d \mid q \\ d>1}} \sum_{l \mid d} \varphi(l) &\ll \exp\left( -\frac{2}{c_0}(\log q)^2\right) \sum_{\substack{d \mid q \\ d>1}} d \\
    &\ll q(\log\log q) \exp\left( -\frac{2}{c_0}(\log q)^2\right).
\end{split}
\end{align}
Here we use Gronwall's theorem (see \cite[Theorem 323]{HW}) in the last step. Similarly, we use $\varphi(n) \leq n$ to obtain 
\begin{align*}
    (\log X)^2 \sum_{\substack{d \mid q \\ d>1}} \sum_{l \mid d} \frac{\varphi(l)}{l} &\ll (\log q)^4 \sum_{\substack{d \mid q \\ d>1}} \tau_2(q) \\
    &\ll (\log q)^4 \tau_3(q) \\
    & \ll (\log q)^4 \exp \left( C\frac{\log q}{\log\log q}\right)
\end{align*}
for an absolute constant $C>0$, where $\tau_k(n)=\sum_{m_1\dots m_k=n} 1$ denotes the $k$-fold divisor function. Resetting the constant, we have 
\begin{align}
\begin{split}
    \label{error-2}
    (\log X)^2 \sum_{\substack{d \mid q \\ d>1}} \sum_{l \mid d} \frac{\varphi(l)}{l} & \ll \exp \left( C\frac{\log q}{\log\log q}\right).
\end{split}
\end{align}

Therefore, by combining (\ref{S_0(X)}), (\ref{integral-|t|>T}), (\ref{integral-v}), (\ref{integral-h}), (\ref{error-1}) and (\ref{error-2}), we obtain
\begin{align*}
    \sum_{\substack{\chi\neq \chi_0 \\ \chi(-1)=1}} \frac{1}{\abs{L(1,\chi)}^2} &= \frac{\zeta(2)}{2\zeta(4)}\prod_{p \mid q} \left( 1-\frac{1}{p^2}\right)^{-1} \sum_{\substack{d \mid q \\ d>1}} \varphi^*(d) +O\left( \exp \left( C\frac{\log q}{\log\log q}\right) \right) \\
    &\quad+O\left( \delta_1\left(1-\beta_1\right)^{-1}\left( (\log q)^2+ (1-\beta_1)^{-1}\right)\right).
\end{align*}
Since $\sum_{\substack{d \mid q \\ d>1}} \varphi^*(d)=\varphi(q)-1$, we find that the main term in the above is 
\begin{align*}
    \frac{\zeta(2)}{2\zeta(4)}\prod_{p \mid q} \left( 1-\frac{1}{p^2}\right)^{-1} \sum_{\substack{d \mid q \\ d>1}} \varphi^*(d) &= \frac{\zeta(2)}{2\zeta(4)}\prod_{p \mid q} \left( 1-\frac{1}{p^2}\right)^{-1} \varphi(q) + O\left( \exp\left(\omega(q)\right)\right),
\end{align*}
where $\omega(n)$ denotes the number of distinct prime divisor of $n$. By using the estimate $\omega(q) \ll \frac{\log q}{\log\log q}$ (see \cite[Theorem 2.10]{MV}), we complete the proof of Theorem \ref{thm:negative square moment}.

\section{Proof of Theorem \ref{thm:negative square moment with conductor}}
We assume that $q=p^k$ is a prime power and that the exceptional zero does not exist throughout this section. We put $\mathfrak{b}=1$ in Lemma \ref{lem:S(X)} to obtain 
    \begin{align}
    \begin{split}
    \label{S_1(X)}
    S_1(X) = &\frac{\zeta(2)}{2\zeta(4)}\prod_{p \mid q} \left( 1-\frac{1}{p^2}\right)^{-1} \sum_{\substack{d \mid q \\ d>1}} \frac{\varphi^*(d)}{d} \\
    &\quad+O\left( X^{-\frac{1}{2}} \sum_{\substack{d \mid q \\ d>1}} \frac{1}{d}\sum_{l \mid d} \varphi(l) \right)+O\left((\log X)^2 \sum_{\substack{d \mid q \\ d>1}} \frac{1}{d}\sum_{l \mid d} \frac{\varphi(l)}{l}\right).
    \end{split}
    \end{align}

From the assumption, the function
\[
\tilde{g_\chi}(s):= \frac{X^{s-1}\Gamma(s-1)}{f_\chi L(s,\chi)L(s,\overline{\chi})}
\]
has only a simple pole at $s=1$ in the closed contour $\mathcal{C}_T$ which was defined in the previous section. From the residue theorem, we have
\begin{align*} 
    S_1(X) &= \sum_{\substack{\chi\neq \chi_0 \\ \chi(-1)=1}} \frac{1}{f_{\chi}\abs{L(1,\chi)}^2} \\
    &\quad + \frac{1}{2\pi i} \left(\int_{3+iT}^{A(c_1)+iT}+ \int_{A(c_1)+iT}^{A(c_1)-iT}+\int_{A(c_1)-iT}^{3-iT}\right)G_1(s)X^{s-1}\Gamma(s-1)ds \\
    &\quad +\frac{1}{2\pi i} \int_{\substack{\sigma=3 \\\abs{t} >T}} G_1(s)X^{s-1}\Gamma(s-1)ds.
\end{align*}

Putting $T=q, X =\exp\left(\frac{6}{c_0} \log q \log\log q \right)$, and by the same argument as in the proof of Theorem \ref{thm:negative square moment}, we find that 
\begin{align}
\begin{split}
\label{integral-|t|>T with conductor}
    \frac{1}{2\pi i} \int_{\substack{\sigma=3 \\\abs{t} >T}} G_1(s)X^{s-1}\Gamma(s-1)ds &\ll \sum_{\substack{\chi \neq \chi_0 \\ \chi(-1)=1}} \frac{1}{f_\chi} X^2 \int_T^\infty \abs{\Gamma(2+it)}dt \\ 
    &\ll X^2 q^\frac{3}{2}e^{-\frac{\pi}{2}q} \sum_{\substack{\chi \neq \chi_0 \\ \chi(-1)=1}} \frac{1}{f_\chi} \\ 
    &\ll k\exp\left(-\frac{\pi}{2}q\left(1+O\left(\frac{\log q \log\log q}{q} \right)\right)\right) 
\end{split}
\end{align}
since 
\[
\sum_{\substack{\chi \neq \chi_0 \\ \chi(-1)=1}} \frac{1}{f_\chi} \leq \frac{k}{2}.
\]
By the same argument as above, we have
\begin{align}
\begin{split}
\label{integral-v with conductor}
    \frac{1}{2\pi i} \int_{A(c_1)+iT}^{A(c_1)-iT}G_1(s)X^{s-1}\Gamma(s-1)ds &\ll \sum_{\substack{\chi \neq \chi_0 \\ \chi(-1)=1}} \frac{1}{f_\chi} (\log q)^2\exp \left( -\frac{c_0/2}{\log q}\log X\right) \\
    &\ll \frac{k}{\log q}
\end{split}
\end{align}
and
\begin{align}
\begin{split}
\label{integral-h with conductor}
    \int_{A(c_1) \pm iT}^{3 \pm iT}G_1(s)X^{s-1}\Gamma(s-1)ds 
    &\ll \sum_{\substack{\chi \neq \chi_0 \\ \chi(-1)=1}}\frac{1}{f_\chi} (\log q)^2 \frac{X^{2} q^{\frac{3}{2}} e^{-\frac{\pi q}{2}}}{\log (qX)} \\
    &\ll k \exp \left( -\frac{\pi}{2}q\left(1+O\left(\frac{\log q \log\log q}{q}\right)\right)\right).
\end{split}
\end{align}
Since $q=p^k$ is a prime power, (\ref{integral-|t|>T with conductor}), (\ref{integral-v with conductor}) and (\ref{integral-h with conductor}) can be estimated by 
\begin{align}
\label{error-integral}
\ll \frac{k}{\log q} = \frac{1}{\log p}.
\end{align}

In order to complete the proof, we estimate the error terms in (\ref{S_1(X)}). Since $\varphi(n)=n\prod_{p \mid n} \left(1-1/p\right)$, we have
\begin{align*}
(\log X)^2 \sum_{\substack{d>1 \\ d \mid q}} \frac{1}{d}\sum_{l \mid d} \frac{\varphi(l)}{l} &\ll (\log q \log\log q)^2 \sum_{j=1}^k \frac{j+1}{p^j} \left(1-\frac{1}{p}\right).
\end{align*}
We now invoke the generating function for the sum of $l$-th powers
\[
\sum_{l=1}^\infty l^n x^l = \frac{x}{(1-x)^{n+1}}A_n(x),
\]
where $A_n(x)$ are the Eulerian polynomials which are defined by the exponential generating function
\[
\sum_{n=0}^\infty A_n(t) \frac{x^n}{n!} = \frac{t-1}{t-e^{(t-1)x}}.
\]
By using the fact $A_1(x)=1$ and the definition of $X$, we have
\begin{align*}
\sum_{j=1}^k \frac{j+1}{p^j}\left(1-\frac{1}{p}\right) \ll \sum_{j=1}^\infty \frac{j}{p^j}+\sum_{j=1}^\infty \frac{1}{p^j} \ll \frac{1}{p}.
\end{align*}
Hence we have
\begin{align*}
(\log q \log\log q)^2 \sum_{j=1}^k \frac{j+1}{p^j}\left(1-\frac{1}{p}\right) &\ll \frac{(\log q \log\log q)^2 }{p} \\
&\ll \frac{k^2(\log p)^2(\log k+\log\log p)^2}{p}.
\end{align*}
So we obtain
\begin{align}
\label{p^k error 2}
(\log X)^2 \sum_{\substack{d>1 \\ d \mid q}} \frac{1}{d}\sum_{l \mid d} \frac{\varphi(l)}{l} &\ll \frac{k^2(\log p)^2(\log k+\log\log p)^2}{p}.
\end{align}
Since $n = \sum_{d \mid n} \varphi(d)$, the second term in (\ref{S_1(X)}) is 
\begin{align}
\label{p^k error 1}
X^{-\frac{1}{2}} \sum_{\substack{d>1 \\ d \mid q}} \frac{1}{d}\sum_{l \mid d} \varphi(l) &= k \exp \left(-\frac{3}{c_0}\log q\log\log q\right).
\end{align}
Finally, since $\varphi^*(p^j)=p^{j-2}(p-1)^2$, the contribution of the first term is
\begin{align}
\label{p^k main}
\frac{\zeta(2)}{2\zeta(4)}\left( 1+\frac{1}{p^2}\right)^{-1} \sum_{\substack{d>1 \\ d \mid q}} \frac{\varphi^*(d)}{d} &= 
\frac{\zeta(2)}{2\zeta(4)} \frac{(p-1)^2}{p^2+1} k.
\end{align}
Therefore, combining (\ref{S_1(X)}), (\ref{error-integral}), (\ref{p^k error 2}), (\ref{p^k error 1})and (\ref{p^k main}), we obtain
\begin{align}
\label{S(X) p^k}
S(X)= \frac{\zeta(2)}{2\zeta(4)} \frac{(p-1)^2}{p^2+1} k + O\left(\frac{k^2(\log p)^2(\log k+\log\log p)^2}{p}\right).
\end{align}
Therefore we complete the proof of Theorem \ref{thm:negative square moment with conductor}.

\section{Application in recovering short generators}

In this section, we describe an application of our main results to cryptography.
We consider the short generator problem, for more details, see~\cite{CDPR15, HWB17}.

\begin{defin}[The short generator problem]
    Let $K$ be a number field with $\mathcal{O}_K$ its ring of integers.
    Given a generator $h$ of the principal ideal $h\mathcal{O}_K$, the goal of the Short Generator Problem is to recover a generator $g$ of it satisfying $\norm{\Log{g^\prime}}_2=\min_{u\in\mathcal{O}_K^*}\norm{\Log{g\cdot u}}_2$.
    Such a generator $g$ is called a shortest generator of the principal ideal $h\mathcal{O}_K$.
\end{defin}

Notice that the notation $\Log{\cdot}$ in the above definition is the logarithmic embedding of the number field defined for a $q$-th number field $K$ as $\Log{\alpha}=\left(\log\abs{\sigma_i (\alpha)}\right)_{i\in G}\in\mathbb{R}^{\varphi (q)/2}$ for all $\alpha\in K$, where $\sigma_j$ are the complex embeddings and $G\coloneqq\left(\mathbb{Z}_q^*/\{\pm 1\}\right)$.
We consider $K=\Q (\zeta_q )$ to be a $q$-th cyclotomic number field with its group of cyclotomic units $\CycUnitGp =\langle -1,\zeta_q , b_j\mid j\in G\setminus\{ 1\}\rangle$ with $b_j=\frac{\zeta_q^j-1}{\zeta_q -1}$.
Let $\bb_j=\Log{b_j}=\Log{b_{-j}}$ for $j\in G\setminus\{ 1\}$.
Then $\{\bb_j\}$ forms a basis for $\Log{\mathcal{C}}$, the log-cyclotomic-unit lattice of $K$.
We denote by $\{\bb_j^\vee\}$ its dual basis corresponding to $\{\bb_j\}$.

\subsection{Previous results on SGP over cyclotomic number fields}

We combine the implication of Theorem~4.1 of~\cite{CDPR15}
to the special case using the distribution given by Lemma~5.4 of~\cite{CDPR15}, which is the main target of our application, more details and proofs is given in Appendix~\ref{App:SGP}.

\begin{thm}[{implication of \cite[Theorem 3.1, Theorem 4.1, Lemma 5.4]{CDPR15}}]
\label{thm:CDPR_main}
    Let $X_1,\dots ,$ $X_{\varphi (q)/2}, X^\prime_1,\dots , X^\prime_{\varphi (q)/2}$ be i.i.d. random variables of the Gaussian distribution with the mean 0 and the standard deviation $r>0$, and let $\hat{X}_i=(X_i^2+X_i^{\prime 2})^{1/2}$.
    Then for any tuple of vectors $\vv^{(1)},\dots ,\vv^{(\varphi (q)/2-1)}\in\R^{\varphi (q)/2}$ of Euclidean norm 1 that are orthogonal to the all-1 vector, and for parameter $t$ such that
    $\frac{1}{2\norm{\bb_j^\vee}_2}>t>T$ for some universal constant $T$,
    \[\prob{\exists j,\abs{\sum_ia_i^{(j)}\log (\hat{X}_i)}\geq t}\leq \left(\varphi (q)-2\right) e^{-t/2}.\]
    Then there is an efficient algorithm that given $g^\prime =g\cdot u$, where $g$ is chosen from the distribution given by $\left(\hat{X}_i\right)$ and $u$ is a cyclotomic unit, outputs an element of the form $\pm\zeta^jg$ with probability at least $1-\left(\varphi (q)-2\right)e^{(-t/2)}$.
\end{thm}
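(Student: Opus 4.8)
The plan is to run the Babai-type rounding (decoding) algorithm of \cite{CDPR15} with respect to the dual basis $\{\bb_j^\vee\}$ of the log-cyclotomic-unit lattice $\Log{\CycUnitGp}$, feeding it the tail estimate in the hypothesis to control the probability that the relevant coordinates of $\Log{g}$ are too large. First recall the geometry: every cyclotomic unit $u$ has absolute norm $1$, so $\sum_i\log\abs{\sigma_i(u)}=0$ and $\Log{u}$ lies in the hyperplane $H$ orthogonal to the all-$1$ vector; hence $\Log{\CycUnitGp}\subseteq H$, the family $\{\bb_j\}_{j\in G\setminus\{1\}}$ is a $\Z$-basis of this lattice of rank $\varphi(q)/2-1$, and its dual basis $\{\bb_j^\vee\}$ (taken inside $H$) again lies in $H$. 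Consequently each $\vv^{(j)}:=\bb_j^\vee/\norm{\bb_j^\vee}_2$ is a vector of Euclidean norm $1$ that is orthogonal to the all-$1$ vector, and its coordinate vector $(a_i^{(j)})_i$ satisfies $\sum_i a_i^{(j)}=0$. We then apply the hypothesis to exactly this tuple $(\vv^{(j)})_j$; in the intended application the required tail bound follows from \cite[Lemma~5.4]{CDPR15} combined with estimates for $\norm{\bb_j^\vee}_2$ (\cite[Theorem~4.1]{CDPR15}), and it is the latter that the moment results of this paper serve to sharpen.

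Next I reduce the recovery of $g$ to a smallness condition on $\Log{g}$. Given $g'=g\cdot u$ with $u\in\CycUnitGp$, write $\Log{u}=\sum_j c_j\bb_j$ with integers $c_j$ (possible since $\{\bb_j\}$ is a $\Z$-basis of $\Log{\CycUnitGp}$; the root-of-unity part of $u$ lies in the kernel of the logarithmic embedding and is immaterial). Then
\[
\langle\Log{g'},\bb_j^\vee\rangle=\langle\Log{g},\bb_j^\vee\rangle+c_j\qquad\text{for every }j .
\]
Hence, if $\abs{\langle\Log{g},\bb_j^\vee\rangle}<1/2$ for all $j$, then rounding $\langle\Log{g'},\bb_j^\vee\rangle$ to the nearest integer returns $c_j$; the element $\tilde u:=\prod_j b_j^{c_j}$ then satisfies $\Log{\tilde u}=\Log{u}$, so $u/\tilde u$ is a root of unity and $g'/\tilde u=(u/\tilde u)\,g=\pm\zeta_q^{m}g$ for some $m$, which is of the asserted form. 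Computing $\Log{g'}$ to sufficient precision, the $\varphi(q)/2-1$ inner products, the roundings, and the quotient $g'/\tilde u$ all run in polynomial time: this is the content of \cite[Theorem~3.1]{CDPR15} (which relies on \cite[Theorem~4.1]{CDPR15} for the structure of $\Log{\CycUnitGp}$), which I invoke for the efficiency claim.

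Finally I bound the success probability. Since $g$ is drawn so that $\abs{\sigma_i(g)}=\hat{X}_i$ up to one fixed positive scalar, and any such scalar contributes $0$ to $\langle\cdot,\bb_j^\vee\rangle$ because $\sum_i a_i^{(j)}=0$, we have $\langle\Log{g},\bb_j^\vee\rangle=\norm{\bb_j^\vee}_2\sum_i a_i^{(j)}\log\hat{X}_i$ for every $j$. By the hypothesis the complement of the event $\{\exists j:\abs{\sum_i a_i^{(j)}\log\hat{X}_i}\ge t\}$ has probability at least $1-(\varphi(q)-2)e^{-t/2}$, and on that complement we have, for every $j$,
\begin{align*}
\abs{\langle\Log{g},\bb_j^\vee\rangle}
&=\norm{\bb_j^\vee}_2\,\abs{\sum_i a_i^{(j)}\log\hat{X}_i} \\
&<\norm{\bb_j^\vee}_2\cdot t<\norm{\bb_j^\vee}_2\cdot\frac{1}{2\norm{\bb_j^\vee}_2}=\frac{1}{2},
\end{align*}
the last inequality being the hypothesis $t<1/(2\norm{\bb_j^\vee}_2)$. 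By the previous paragraph the algorithm then outputs an element of the form $\pm\zeta_q^{m}g$, so its output has the asserted form with probability at least $1-(\varphi(q)-2)e^{-t/2}$, which completes the proof.

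The step I expect to demand the most care is the bookkeeping in the first two paragraphs, not any analytic estimate: identifying $(a_i^{(j)})_i$ with the coordinates of the normalized dual basis, matching $\hat{X}_i$ with $\abs{\sigma_i(g)}$ up to scale (and checking that this scale is killed by $\langle\cdot,\bb_j^\vee\rangle$), and keeping track that the output is pinned down only up to the torsion factor $\pm\zeta_q^{m}$. The one genuinely substantive ingredient, the tail bound itself, is taken here as a hypothesis; establishing it is precisely where the negative moment results proved above feed in, through \cite[Theorem~4.1, Lemma~5.4]{CDPR15}.
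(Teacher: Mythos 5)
Your proposal is correct and follows essentially the same route as the paper's own proof: take $\vv^{(j)}=\bb_j^\vee/\norm{\bb_j^\vee}_2$ in the tail estimate, use the hypothesis $t<1/(2\norm{\bb_j^\vee}_2)$ to deduce $\abs{\langle\Log{g},\bb_j^\vee\rangle}<1/2$ on the good event, and then invoke Babai's round-off to recover $\Log{u}$ and hence $g$ up to a root of unity, exactly as in the appendix. (One small attribution slip: the estimates for $\norm{\bb_j^\vee}_2$ that set the admissible range of $t$ come from Theorem~3.1 of~\cite{CDPR15}, not Theorem~4.1, though this does not affect the argument.)
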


In~\cite{CDPR15}, the relation between the length of the dual basis and the negative square moment of $L\left( 1,\chi\right)$ is also given.
We rephrase the combination of Theorem~3.1 and the equation given in its proof.

\begin{thm}[{\cite[from the proof of Theorem 3.1]{CDPR15}}]
\label{thm:313b}
    Let $q=p^k$ for a prime $p$, and let $\{\bb^\vee_j\}_{j\in G\setminus\{ 1\}}$ denote the basis dual to $\{\bb_j\}_{j\in G\setminus\{ 1\}}$.
    Then all $\norm{\bb^\vee_j}_2$ are equal, and
    \begin{align*}
        \lVert\bb_j^\vee\rVert_2^2 & = 4\abs{G}^{-1}\cdot\sum_{\chi\in\hat{G}\setminus\{ 1\}}f_{\chi}^{-1}\cdot\abs{L\left( 1,\chi\right)}^{-2},
    \end{align*}
    where $\hat{G}$ is the set of characters of $G$.
\end{thm}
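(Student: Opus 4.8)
The plan is to express the dual lattice basis vectors $\bb_j^\vee$ explicitly in terms of additive characters of $G = (\Z/q\Z)^*/\{\pm 1\}$, then diagonalize the Gram matrix of $\{\bb_j\}$ by passing to the character basis, and finally identify the diagonal entries with $L(1,\chi)$-values via the classical formula relating cyclotomic units to $L$-functions. First I would recall that the log-cyclotomic-unit lattice $\Log{\CycUnitGp}$ lies in the trace-zero hyperplane of $\R^{\varphi(q)/2}$, and that $G$ acts on it; since $q = p^k$ is a prime power, $G$ is cyclic and this action is transitive on the basis $\{\bb_j\}_{j \in G \setminus \{1\}}$ in the appropriate sense, which already forces all $\norm{\bb_j^\vee}_2$ to be equal (any two are conjugate under an isometry of the lattice). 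This handles the first assertion essentially by symmetry.

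For the norm computation, the key step is to use the discrete Fourier transform over the finite abelian group $G$. Writing $\bb_j = \Log{b_j}$ and pairing against a character $\chi$ of $G$, one gets $\sum_{j} \chi(j)\, \bb_j$ whose components are, up to normalization, values of $\log|1 - \zeta_q^a|$ summed against $\chi$ — and the standard evaluation (see the Dirichlet-style computation in \cite[Section 3]{CDPR15}) identifies this with $L(1,\overline{\chi})$ times a Gauss-sum factor of modulus $\sqrt{f_\chi}$ when $\chi$ is primitive of conductor $f_\chi$, and with the analogous expression for imprimitive $\chi$ after stripping Euler factors. Thus in the character basis the Gram matrix of $\{\bb_j\}$ is diagonal with entries proportional to $f_\chi \abs{L(1,\chi)}^2$; inverting gives the Gram matrix of the dual basis with entries proportional to $f_\chi^{-1}\abs{L(1,\chi)}^{-2}$, and reading off a diagonal entry of the dual Gram matrix (equivalently, $\norm{\bb_j^\vee}_2^2$, which is independent of $j$ by the symmetry already established) yields the average $4\abs{G}^{-1}\sum_{\chi \neq 1} f_\chi^{-1}\abs{L(1,\chi)}^{-2}$. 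The factor $4\abs{G}^{-1}$ comes from two sources: a factor $\abs{G}^{-1}$ from the Parseval normalization of the Fourier transform over $G$, and a factor $4$ from the fact that the logarithmic embedding records $\log\abs{\sigma_i(\cdot)}$ for $i$ ranging over $G = (\Z/q\Z)^*/\{\pm1\}$ (so each conjugate pair is counted once) together with the squaring in $\hat{X}_i = (X_i^2 + X_i'^2)^{1/2}$ that doubles the relevant quantities — I would track these constants carefully against the conventions of \cite{CDPR15}.

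The main obstacle is the bookkeeping around imprimitive characters and the exact constant. Each $\chi \in \hat G \setminus \{1\}$ is induced by a unique primitive character of some conductor $f_\chi \mid q$, and the Fourier coefficient $\sum_j \chi(j)\bb_j$ must be related to the \emph{primitive} $L$-value; the discrepancy is exactly the product of Euler factors $\prod_{p' \mid q,\, p' \nmid f_\chi}(1 - \overline{\chi}(p')/p')$, but since $q = p^k$ is a prime power, a nonprincipal $\chi$ has conductor a positive power of $p$, so in fact $\chi$ is already primitive whenever it is nonprincipal of conductor $> 1$ — no, more precisely, characters of $G$ of conductor $p^j$ with $1 \le j \le k$ are primitive as characters mod $p^j$, and the only "missing" Euler factor when viewed mod $q = p^k$ is the one at $p$, which vanishes since $\chi(p) = 0$. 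So in the prime-power case there is no Euler-factor correction at all, and $\abs{L(1,\chi)}$ already means the primitive value; this is the simplification that makes the clean formula possible and is presumably why the hypothesis $q = p^k$ appears. Once this is pinned down, the remaining work is the explicit Fourier evaluation of $\sum_a \chi(a)\log\abs{1 - \zeta_q^a}$, which is entirely classical, and assembling the constant.
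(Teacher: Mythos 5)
The statement you are proving is one that the paper does not prove at all: it is quoted verbatim from the proof of Theorem~3.1 in~\cite{CDPR15} and used as a black box, so there is no in-paper proof to compare against. Your outline does, however, match the actual CDPR15 argument: the Gram matrix of $\{\bb_j\}$ is $G$-circulant, its eigenvalues are extracted by the discrete Fourier transform over the cyclic group $G=(\Z/q\Z)^*/\{\pm1\}$, and each eigenvalue is identified with $f_\chi\abs{L(1,\chi)}^2$ (up to a constant) via the classical evaluation of $\sum_a\bar\chi(a)\log\abs{1-\zeta_q^a}$; the dual Gram matrix then has reciprocal eigenvalues, and $\norm{\bb_j^\vee}_2^2$ is their average. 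Your observation that for $q=p^k$ every nonprincipal character mod $q$ has the same $L(1,\chi)$ as its primitive inducing character (because the only prime dividing $q$ already divides $f_\chi$, so the Euler-factor product is empty) is correct and is indeed the reason the formula is clean for prime powers.

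There is one genuine error in your bookkeeping of the constant. You attribute the factor of $4$ in part to ``the squaring in $\hat{X}_i=(X_i^2+X_i^{\prime 2})^{1/2}$.'' The random variables $\hat{X}_i$ belong to an entirely separate part of CDPR15 (the Gaussian sampling setup in their Lemma~5.4 and Theorem~4.1, reproduced as Theorem~\ref{thm:CDPR_main} here) and play no role whatsoever in Theorem~3.1, which is a deterministic lattice-geometric statement. The actual source of the $4$ is the following. The classical identity for an even primitive $\chi$ of conductor $f$ gives $\abs{\sum_{a\in(\Z/f\Z)^*}\bar\chi(a)\log\abs{1-\zeta_f^a}}=\sqrt{f}\,\abs{L(1,\chi)}$ (using $\abs{\tau(\chi)}=\sqrt f$). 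Because $\log\abs{1-\zeta^a}=\log\abs{1-\zeta^{-a}}$, restricting the sum to a set of representatives of $G$ (one element per $\pm$-pair) halves it, so the eigenvalue of the Gram matrix associated to $\chi$ is $\lambda_\chi=\tfrac14 f_\chi\abs{L(1,\chi)}^2$. Inverting and averaging then produces exactly the prefactor $4\abs{G}^{-1}$. Your parenthetical ``(so each conjugate pair is counted once)'' is pointing at the right phenomenon, but it yields a factor of $2$ on the amplitude and hence $4$ only after squaring; the $\hat{X}_i$ have nothing to do with it. The rest of your sketch is essentially the CDPR15 route and would go through once this constant is tracked correctly.
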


Here we mark out the estimate of the length of the dual basis given by~\cite{CDPR15}.

\begin{thm}[{\cite[part of Theorem~3.1]{CDPR15}}]
\label{thm:CDPR_b_uncond}
    Let $q=p^k$ for a prime $p$, and let $\{\bb^\vee_j\}_{j\in G\setminus\{ 1\}}$ denote the basis dual to $\{\bb_j\}_{j\in G\setminus\{ 1\}}$.
    Then all $\norm{\bb^\vee_j}_2$ are equal, and
    \[\lVert\bb_j^\vee\rVert_2^2\leq 2k\abs{G}^{-1}\cdot\left(\ell (q)^2+O(1)\right) =4C^2 k\frac{\left(\log q\right)^2}{q}(1+o(1)),\]
    where $\ell (q)=C\log q$ for some $C>0$.
\end{thm}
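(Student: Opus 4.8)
The plan is to derive the estimate directly from Theorem~\ref{thm:313b}, using in place of the conditional asymptotic formula of Theorem~\ref{thm:negative square moment with conductor} the classical unconditional lower bounds of Landau~\cite{Lan} for $L(1,\chi)$. Identifying $\hat G\setminus\{1\}$ with the set of non-principal even Dirichlet characters modulo $q=p^k$ (characters of $G=\Z_q^\ast/\{\pm1\}$ are exactly those $\chi$ with $\chi(-1)=1$), Theorem~\ref{thm:313b} already yields that all $\norm{\bb_j^\vee}_2$ agree and that
\[
\norm{\bb_j^\vee}_2^2=4\abs{G}^{-1}\sum_{\substack{\chi\neq\chi_0\\\chi(-1)=1}}\frac{1}{f_\chi\abs{L(1,\chi)}^2},
\]
so it suffices to bound this character sum. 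For a prime power modulus the Euler factor relating $L(s,\chi)$ to $L(s,\chi^\ast)$ is trivial at $s=1$, so throughout one may apply Landau's bounds to the primitive character $\chi^\ast$ inducing $\chi$, whose conductor equals $f_\chi$.

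First I would isolate the quadratic characters: since $q=p^k$ is a prime power there is at most one non-principal even quadratic character modulo $q$, and its conductor is then $p$ (for odd $p$) or at most $8$ (for $p=2$), cf.\ \cite[Section~9.3]{MV}. For this single character the bound $\abs{L(1,\chi)}\gg f_\chi^{-1/2}$ gives $f_\chi^{-1}\abs{L(1,\chi)}^{-2}\ll f_\chi^{-1}\cdot f_\chi=O(1)$. Next, for every remaining (non-principal, even, non-quadratic) character $\chi$, Landau's bound $\abs{L(1,\chi)}\gg1/\log f_\chi$ gives, uniformly in $\chi$, the estimate $\abs{L(1,\chi)}^{-2}\le C^2(\log f_\chi)^2\le C^2(\log q)^2=\ell(q)^2$ for a suitable absolute $C>0$; summing over these $\chi$ and invoking $\sum_{\chi\neq\chi_0,\,\chi(-1)=1}f_\chi^{-1}\le k/2$ from \cite[Claim~3.5]{CDPR15} bounds their combined contribution by $\frac{k}{2}\ell(q)^2$. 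Adding the two contributions and absorbing the $O(1)$ into $\frac{k}{2}\cdot O(1)$ (legitimate since $k\ge1$) yields $\sum_\chi f_\chi^{-1}\abs{L(1,\chi)}^{-2}\le\frac{k}{2}(\ell(q)^2+O(1))$, hence $\norm{\bb_j^\vee}_2^2\le 2k\abs{G}^{-1}(\ell(q)^2+O(1))$; the final equality in the statement is then just the substitution $\abs{G}=\varphi(q)/2$, $\varphi(p^k)=p^k(1-1/p)$ and $\ell(q)=C\log q$.

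There is no real analytic obstacle here: the argument is essentially a packaging of classical lower bounds for $L(1,\chi)$. The one genuinely delicate point is the quadratic character, where a Siegel-type zero would obstruct a uniform $1/\log f_\chi$ lower bound; the prime-power hypothesis defuses this, since such a character is unique and has conductor $O(p)$, so that the crude but unconditional and effective estimate $\abs{L(1,\chi)}\gg f_\chi^{-1/2}$ already forces its contribution down to $O(1)$. The remaining ingredients — the count of quadratic characters modulo $p^k$ and the elementary bound $\sum f_\chi^{-1}\le k/2$ — are standard.
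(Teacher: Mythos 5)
Your proof is correct and follows the same route the paper itself sketches. Note that Theorem~\ref{thm:CDPR_b_uncond} is a quoted result from \cite{CDPR15} and the paper contains no proof environment for it; however, immediately after Theorem~\ref{thm:negative square moment with conductor} the authors reproduce exactly your argument (isolating the at most finitely many even quadratic characters modulo a prime power, applying Landau's lower bounds $\abs{L(1,\chi)}\gg 1/\log f_\chi$ for non-quadratic and $\abs{L(1,\chi)}\gg f_\chi^{-1/2}$ for quadratic primitive characters, and summing against $\sum f_\chi^{-1}\le k/2$) to explain why, without their new asymptotic, one only gets the $\ll k^3(\log p)^2$ bound equivalent to the quoted $\ell(q)^2+O(1)$ statement. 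Your additional observations — that the prime-power modulus makes the Euler correction factor at $s=1$ trivial so Landau's bound applies to $\chi$ via its inducing primitive $\chi^\ast$, and that $f_\chi^{-1}\abs{L(1,\chi)}^{-2}\ll 1$ for each quadratic character regardless of the size of $f_\chi$ — are sound and, if anything, slightly more careful than the paper's one-line gloss.
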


\subsection{Our improvements}

In this subsection, we show the improvement on estimating the length of the dual basis obtained from applying our main theorems.

If $q$ is a large prime number, then $f_\chi =q$ for all $\chi$ modulo $q$ and $\chi\neq \chi_0$. Hence by substituting (\ref{consequence of the Siegel Theorem}) we have
\begin{align*}
    \lVert \bb_j^\vee \rVert_2 = \sqrt{\frac{4\zeta(2)}{\zeta(4)q} \left(1+O\left(q^{-1+\eps}\right)\right)} = \frac{2\sqrt{15}}{\pi \sqrt{q}} \left(1+O\left(q^{-1+\eps}\right)\right).
\end{align*}
Then there exists $Q>0$ such that for any $q>Q$,
    \[
    \lVert \bb_j^\vee \rVert_2 <\frac{2\sqrt{15}}{\pi \sqrt{q}} \left(1+\delta\right)
    \]
    for some positive $\delta\ll 1$.
    Then by applying Theorem~\ref{thm:CDPR_main}, we obtain the following result.

\begin{cor}
\label{cor:algo}
    Let $q$ be a large prime number.
    There exists an efficient algorithm that given $g^\prime =g\cdot u$, where $g$ is chosen from $D(t,\alpha)$ and $u\in\CycUnitGp$ is a cyclotomic unit, outputs an element of the form $\pm\zeta^jg$ with probability at least $\alpha =1-(q-3)e^{-t/2}$ with $t=\frac{\pi}{4\sqrt{15}}\frac{\sqrt{q}}{1+\delta}>T$ for some small $\delta >0$.
\end{cor}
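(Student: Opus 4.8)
\textbf{Proof proposal for Corollary~\ref{cor:algo}.}
The plan is to feed the bound on $\norm{\bb_j^\vee}_2$ obtained just above into Theorem~\ref{thm:CDPR_main}, tracking the constant through the hypothesis $\frac{1}{2\norm{\bb_j^\vee}_2}>t>T$. First I would fix $q$ large enough (say $q>Q$) so that the asymptotic formula~\eqref{consequence of the Siegel Theorem} applies with $f_\chi=q$ for every nonprincipal $\chi$ modulo the prime $q$; this gives $\norm{\bb_j^\vee}_2<\frac{2\sqrt{15}}{\pi\sqrt{q}}(1+\delta)$ for a suitable small $\delta>0$, as recorded before the statement. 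Consequently
\[
\frac{1}{2\norm{\bb_j^\vee}_2}>\frac{\pi\sqrt{q}}{4\sqrt{15}\,(1+\delta)},
\]
so the choice $t=\frac{\pi}{4\sqrt{15}}\frac{\sqrt{q}}{1+\delta}$ indeed satisfies $t<\frac{1}{2\norm{\bb_j^\vee}_2}$, and $t>T$ holds once $q$ is large (enlarging $Q$ if necessary, since $T$ is a universal constant). Thus the hypotheses of Theorem~\ref{thm:CDPR_main} are met for this $t$.

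Next I would invoke the conclusion of Theorem~\ref{thm:CDPR_main} directly: with $g$ sampled from the distribution given by $(\hat X_i)$ — here denoted $D(t,\alpha)$ — and $u$ a cyclotomic unit, the efficient algorithm on input $g'=g\cdot u$ outputs an element of the form $\pm\zeta^j g$ with probability at least $1-(\varphi(q)-2)e^{-t/2}$. Since $q$ is prime, $\varphi(q)=q-1$, so $\varphi(q)-2=q-3$, and the success probability is at least $1-(q-3)e^{-t/2}=\alpha$ with the stated $t$. This is essentially bookkeeping: the content is entirely in the earlier moment estimate and in Theorem~\ref{thm:CDPR_main}.

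The only mild subtlety — and the one place I would be careful — is compatibility of the probabilistic tail hypothesis in Theorem~\ref{thm:CDPR_main} with the parameters: one needs the event $\{\exists j,\ \abs{\sum_i a_i^{(j)}\log(\hat X_i)}\geq t\}$ to have probability at most $(\varphi(q)-2)e^{-t/2}$ for the chosen $t$, which is exactly what the tuple condition in that theorem encodes and what $D(t,\alpha)$ is defined to respect; so I would simply point to that definition rather than reprove it. Beyond this, there is no real obstacle: the corollary is a clean specialization of Theorem~\ref{thm:CDPR_main} to prime $q$ using the improved constant $\frac{2\sqrt{15}}{\pi}$ (in place of the $4C^2$ of Theorem~\ref{thm:CDPR_b_uncond}) coming from~\eqref{consequence of the Siegel Theorem}. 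I would close by remarking that any $q>Q$ with $Q$ large enough to absorb both the error term in~\eqref{consequence of the Siegel Theorem} and the constraint $t>T$ works, which is what "large prime number" in the statement means.
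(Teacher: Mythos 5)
Your proposal is correct and follows essentially the same path as the paper: specialize Theorem~\ref{thm:313b} to prime $q$ (so $f_\chi=q$ for all nonprincipal $\chi$), substitute the asymptotic~\eqref{consequence of the Siegel Theorem} to obtain $\norm{\bb_j^\vee}_2=\frac{2\sqrt{15}}{\pi\sqrt{q}}\bigl(1+O(q^{-1+\eps})\bigr)$, choose $\delta$ and $Q$ so that $\norm{\bb_j^\vee}_2<\frac{2\sqrt{15}}{\pi\sqrt{q}}(1+\delta)$ for $q>Q$, and feed $t=\frac{\pi}{4\sqrt{15}}\frac{\sqrt{q}}{1+\delta}$ into Theorem~\ref{thm:CDPR_main} with $\varphi(q)-2=q-3$. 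Your version is slightly more explicit about verifying $T<t<\frac{1}{2\norm{\bb_j^\vee}_2}$, but this is the same bookkeeping the paper carries out in the paragraph preceding the corollary.
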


For the case $q=p^k$ is a prime power, first we assume that $k =o\left(p/(\log p)^4\right)$ with $p$ large enough. Under the assumption of the nonexistence of the exceptional zero, we obtain
\begin{align*}
    \lVert \bb_j^\vee \rVert_2 &= \sqrt{\frac{8}{\varphi(p^k)}\left(\frac{\zeta(2)}{2\zeta(4)} \frac{(p-1)^2}{p^2+1} k\left(1
    +o(1)\right)\right)} \\
    &= \frac{2\sqrt{15}}{\pi} \sqrt{\frac{k}{\varphi(p^k)}}\left(1
    +o(1)\right).
\end{align*}
While for the case $k \gg p/(\log p)^4$, under the assumption of the nonexistence of the exceptional zero, we have 
\begin{align*}
    \lVert \bb_j^\vee \rVert_2 &\ll \frac{k \log p(\log k+\log\log p)}{\sqrt{\varphi(p^k)p}} .
\end{align*}
Then there exists $Q_1>0$ such that for any $q>Q_1$,
    \begin{align*}
        \lVert \bb_j^\vee \rVert_2 & = \frac{2\sqrt{15}}{\pi} \sqrt{\frac{k}{\varphi(p^k)}}\left(1 +o(1)\right)\\
        & <\frac{2\sqrt{15}}{\pi} \sqrt{\frac{k}{\varphi(p^k)}}\left(1 +\delta_1\right)
    \end{align*}
    for some positive $\delta_1\ll 1$.
    Otherwise, there exists $Q_2>0$ such that for any $q>Q_2$,
    \[
    \lVert \bb_j^\vee \rVert_2 =\bigO{\frac{k \log p(\log k+\log\log p)}{\sqrt{\varphi(p^k)p}}}\leq C_2\frac{k \log p(\log k+\log\log p)}{\sqrt{\varphi(p^k)p}}
    \]
    for some constant $C_2 >0$.
    Thus we derive the following result by applying Theorem~\ref{thm:CDPR_main}.

\begin{cor}
\label{cor:algo_pk}
    Let $q=p^k$ be a power of a prime number and assume that the exceptional zeros do not exist.
    There exists an efficient algorithm that given $g^\prime =g\cdot u$, where $g$ is chosen from $D(t,\alpha)$ and $u\in\CycUnitGp$ is a cyclotomic unit, outputs an element of the form $\pm\zeta^jg$ with probability at least $\alpha =1-(\varphi (q)-2)e^{-t/2}$ with $t=\frac{\pi}{4\sqrt{15}}\sqrt{\frac{\varphi (q)}{k}}\frac{1}{1+\delta}>T$ for some small $\delta >0$ if $k=o\left(\frac{p}{\left(\log p\right)^4}\right)$ and $T<t\leq\frac{\sqrt{\varphi (q)p}}{2\log q\log\log q}$ otherwise.
\end{cor}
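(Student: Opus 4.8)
The plan is to combine the exact identity for the dual‑basis length in Theorem~\ref{thm:313b} with the asymptotic evaluation in Theorem~\ref{thm:negative square moment with conductor}, and then feed the resulting size estimate for $\norm{\bb_j^\vee}_2$ into Theorem~\ref{thm:CDPR_main}. The first point is that, since $G=\Z_q^*/\{\pm1\}$, the characters of $\hat G\setminus\{1\}$ are exactly the nonprincipal Dirichlet characters $\chi\bmod q$ with $\chi(-1)=1$, and $\abs{G}=\varphi(q)/2$, so Theorem~\ref{thm:313b} reads
\[
\norm{\bb_j^\vee}_2^{2}=\frac{8}{\varphi(q)}\sum_{\substack{\chi\neq\chi_0\\ \chi(-1)=1}}\frac{1}{f_\chi\abs{L(1,\chi)}^{2}},
\]
which is precisely the quantity estimated in Theorem~\ref{thm:negative square moment with conductor}. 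After substituting, it remains to read off $\tfrac{1}{2\norm{\bb_j^\vee}_2}$ and to verify, in each of the two regimes for $k$, that the claimed $t$ lies in the window $T<t<\tfrac{1}{2\norm{\bb_j^\vee}_2}$ required by Theorem~\ref{thm:CDPR_main}; its conclusion is then exactly the asserted algorithm with success probability at least $1-(\varphi(q)-2)e^{-t/2}=\alpha$.

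In the regime $k=o(p/(\log p)^4)$ the main term of Theorem~\ref{thm:negative square moment with conductor} dominates both error terms, so with $\zeta(2)/\zeta(4)=15/\pi^{2}$ and $(p-1)^2/(p^2+1)=1+o(1)$ one obtains $\norm{\bb_j^\vee}_2=\tfrac{2\sqrt{15}}{\pi}\sqrt{k/\varphi(q)}\,(1+o(1))$, hence $\tfrac{1}{2\norm{\bb_j^\vee}_2}=\tfrac{\pi}{4\sqrt{15}}\sqrt{\varphi(q)/k}\,(1+o(1))$. Fixing $\delta>0$, for all sufficiently large $q$ this exceeds $t:=\tfrac{\pi}{4\sqrt{15}}\sqrt{\varphi(q)/k}\cdot\tfrac{1}{1+\delta}$, and since $\varphi(p^k)=p^{k-1}(p-1)$ grows far faster than $k$ we have $\varphi(q)/k\to\infty$, so $t>T$ eventually. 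Theorem~\ref{thm:CDPR_main} then finishes this case.

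In the regime $k\gg p/(\log p)^4$ only the upper bound part of Theorem~\ref{thm:negative square moment with conductor} is available, yielding $\norm{\bb_j^\vee}_2\ll \dfrac{k\log p\,(\log k+\log\log p)}{\sqrt{\varphi(q)p}}$; rewriting $\log q=k\log p$ and $\log\log q=\log k+\log\log p$ turns this into $\norm{\bb_j^\vee}_2\ll \dfrac{\log q\log\log q}{\sqrt{\varphi(q)p}}$, so $\tfrac{1}{2\norm{\bb_j^\vee}_2}\gg \dfrac{\sqrt{\varphi(q)p}}{\log q\log\log q}$. Hence for all large $q$ every $t$ with $T<t\le \dfrac{\sqrt{\varphi(q)p}}{2\log q\log\log q}$ satisfies $t<\tfrac{1}{2\norm{\bb_j^\vee}_2}$ (after absorbing the implied constant), and this window is nonempty because $\sqrt{\varphi(q)p}/(\log q\log\log q)\to\infty$ as $q=p^k\to\infty$. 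Theorem~\ref{thm:CDPR_main} again applies. In both cases $D(t,\alpha)$ is the input distribution of Theorem~\ref{thm:CDPR_main} (the tuple $(\hat X_i)$ with its Gaussian parameter calibrated so that the tail estimate there holds at the chosen $t$), which is admissible precisely because $t>T$.

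The hard part is only the bookkeeping, not any new analysis: one must check that the $(1+o(1))$ produced by Theorem~\ref{thm:negative square moment with conductor} (and by $(p-1)^2/(p^2+1)$) can be absorbed into the clean factor $1/(1+\delta)$, and, in the second regime, that the implied constant in $\norm{\bb_j^\vee}_2\ll\cdots$ is compatible with the constant $\tfrac12$ appearing in the stated range for $t$. Beyond Theorems~\ref{thm:negative square moment with conductor},~\ref{thm:313b} and~\ref{thm:CDPR_main}, nothing further is needed.
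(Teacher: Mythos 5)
Your proposal is correct and follows essentially the same route as the paper: substitute the asymptotic from Theorem~\ref{thm:negative square moment with conductor} into the exact identity of Theorem~\ref{thm:313b} to get the size of $\norm{\bb_j^\vee}_2$ in each of the two regimes, then verify $T<t<\tfrac{1}{2\norm{\bb_j^\vee}_2}$ and invoke Theorem~\ref{thm:CDPR_main}. The one point you flag but do not fully resolve—whether the implied constant in $\norm{\bb_j^\vee}_2\ll\log q\log\log q/\sqrt{\varphi(q)p}$ is compatible with the explicit $\tfrac12$ in the stated range $t\le\sqrt{\varphi(q)p}/(2\log q\log\log q)$—is a genuine looseness, but it is present in the paper's own presentation as well (it writes an unspecified $C_2$ and then states the clean bound), so it does not constitute a gap in your argument relative to the paper's.
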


We list the estimates on the length of the dual basis and the corresponding lower bounds on the success probability of the algorithm described in Theorem~\ref{thm:CDPR_main}, in terms of $t$, in Table~\ref{table}.
One can observe that when $q$ is a prime number, we improve $t$ by a log term and also get the optimal description of order of $t$ for Theorem~\ref{thm:CDPR_main}'s setting other than only a lower bound.
Notice that compared to the prior work~\cite{NT24+}, we remove the GRH assumption.
For prime power $q$, the bound for $L(1,\chi)$ doesn't change, so that~\cite{CDPR15} has the same estimate for the dual basis.
Therefore, we improve the estimate under assuming the exceptional zeros don't exist for both cases, $k =o\left(p/(\log p)^4\right)$ with large prime $p$ (``first $q^k$ condition" henceforth), and $k \gg p/(\log p)^4$.
Moreover, since the estimates on the dual basis for prime $q$ and for the first $q^k$ condition are equalities, their parameters $t$ and the corresponding lower bound on the success probability are optimal for Theorem~\ref{thm:CDPR_main}'s setting.

\begin{center}
\begin{table}[H]
\caption{Comparison: the constant $C$ refer to Theorem~\ref{thm:CDPR_b_uncond}.}
\label{table}
\begin{tabular}{|l|c|c|}
\hline
 Condition and approach & $\displaystyle\norm{\bb^\vee_j}_2$ & Parameter $t$ in Theorem~\ref{thm:CDPR_main}\\
 \hline
 $q=p^k$; bound on $L(1,\chi)$~\cite{CDPR15} & $\displaystyle \leq 2C\sqrt{k}\frac{\log q}{\sqrt{q}}(1+o(1))$
 & $\displaystyle \gg \frac{\sqrt{q}}{\sqrt{k}\log q}$ \\
 \hline
 \multirow{2}{14em}{$q$: large prime; negative moment} & & \\
 & $\displaystyle =\frac{2\sqrt{15}}{\pi \sqrt{q}} \left(1+O\left(q^{-1+\eps}\right)\right)$ & $\displaystyle \asymp\sqrt{q}$ \\
 \hline
 \multirow{5}{14em}{$\displaystyle q=p^k$ with $p$: large prime, $k =o\left(p/(\log p)^4\right)$; negative moment (assuming the nonexistence of exceptional zeros)} & & \\
  & &  \\
  & $\displaystyle = \frac{2\sqrt{15}}{\pi} \sqrt{\frac{k}{\varphi(q)}}\left(1 + o(1)\right)$
  & $\displaystyle \asymp \sqrt{\frac{q}{k}}$
  \\
  & & \\
  & & \\
 \hline
 \multirow{5}{14em}{$\displaystyle q=p^k$ with $p$: prime, $k \gg p/(\log p)^4$; negative moment (assuming the nonexistence of exceptional zeros)} & & \\
 & & \\
  & $\displaystyle \ll \frac{\log q\log\log q}{\sqrt{\varphi(q)p}}$
  & $\displaystyle \gg \frac{\sqrt{qp}}{\log q\log\log q}$
  \\
  & & \\
   & & \\
\hline
\end{tabular}
\end{table}
\end{center}

\begin{ack} 
The authors would like to thank Professor Kohji Matsumoto for reading the manuscript carefully and giving them valuable comments. The authors would also like to thank Professor Fran{\c{c}}ois Le Gall for his financial supports. I.-I. Ng is supported by MEXT Q-LEAP grant No. JPMXS0120319794.
Y. Toma is supported by Grant-in-Aid for JSPS Research Fellow grant No. 24KJ1235. 
\end{ack} 

\printbibliography

\appendix
\section{Proof Theorem~\ref{thm:CDPR_main}}
\label{App:SGP}

In this section, we describe the details and the proof of Theorem~\ref{thm:CDPR_main}.
As in~\cite{CDPR15}, we assume that the index $[\Log{\mathcal{O}_K^*}:\Log{\CycUnitGp}]$ is small.

We denote by $D(t,\alpha)$ the distribution over $K$ with the property that for any tuple of vectors $\vv_1,\dots ,\vv_{\ell}\in\R^{\varphi (q)/2}$ of Euclidean norm 1 that are orthogonal to the all-1 vector $\boldsymbol{1}$, the probability that $\abs{\langle\Log{g},\mathbf{v}_j\rangle}<t$ holds for all $j=1,\dots ,\ell$ is at least some $\alpha >0$, where $g$ is chosen from $D(t,\alpha)$ and the parameter $t$ is positive (remembering that $\Log{K}\subset\R^{\varphi (q)/2}$).

The following is an immediate result from the fact that the image $\Log{u}\in\R^{\varphi (q)/2}$ for $u\in\Ok^*$ is orthogonal to the all-1 vector.

\begin{lem}
\label{lem:districution_Log(u)}
    Let $g$ be chosen from $D(t,\alpha)$.
    Then $\abs{\left\langle\Log{g},\frac{\bb_j^{\vee}}{\norm{\bb_j^{\vee}}_2}\right\rangle}<t$ holds for all $j=1,\dots ,\varphi (q)/2$ with probability at least $\alpha$.
\end{lem}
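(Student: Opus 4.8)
The plan is to deduce the lemma directly from the defining property of the distribution $D(t,\alpha)$. That property applies to any tuple of vectors in $\R^{\varphi (q)/2}$ of Euclidean norm $1$ that are orthogonal to the all-$1$ vector $\boldsymbol{1}$, so the only thing I need to check is that each normalized dual vector $\bb_j^{\vee}/\norm{\bb_j^{\vee}}_2$ is such a vector; the probability bound will then be immediate upon feeding the tuple $\bigl(\bb_j^{\vee}/\norm{\bb_j^{\vee}}_2\bigr)_{j\in G\setminus\{1\}}$ into the definition.

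First I would show $\Log{\CycUnitGp}$ sits inside the hyperplane $H:=\boldsymbol{1}^{\perp}\subset\R^{\varphi (q)/2}$. For a unit $u\in\Ok^*$ the product formula gives $\abs{N_{K/\Q}(u)}=\prod_{i\in G}\abs{\sigma_i(u)}^2=1$, hence $\langle\Log{u},\boldsymbol{1}\rangle=2\sum_{i\in G}\log\abs{\sigma_i(u)}=0$; in particular $\bb_j=\Log{b_j}\in H$ for every $j\in G\setminus\{1\}$. Since $\{\bb_j\}_{j\in G\setminus\{1\}}$ is a basis of the lattice $\Log{\CycUnitGp}$ and this lattice has rank $\varphi (q)/2-1=\dim H$, the vectors $\bb_j$ span $H$. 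Consequently the dual basis $\{\bb_j^{\vee}\}$, characterized by $\langle\bb_i,\bb_j^{\vee}\rangle=\delta_{ij}$ inside $H$, also lies in $H$: each $\bb_j^{\vee}$ is orthogonal to $\boldsymbol{1}$, and so is its normalization $\bb_j^{\vee}/\norm{\bb_j^{\vee}}_2$, which has Euclidean norm $1$ by construction.

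Finally I would invoke the definition of $D(t,\alpha)$ with $\ell=\varphi (q)/2-1$ and $\vv_j:=\bb_j^{\vee}/\norm{\bb_j^{\vee}}_2$ for $j\in G\setminus\{1\}$: for $g$ drawn from $D(t,\alpha)$ the inequality $\abs{\langle\Log{g},\vv_j\rangle}<t$ holds simultaneously for all such $j$ with probability at least $\alpha$, which is exactly the assertion. There is no substantive obstacle here — the lemma is a formal consequence of $\Log{\Ok^*}\perp\boldsymbol{1}$ together with the elementary fact that the dual of a full-rank lattice in a subspace lies in that same subspace; the only point needing a word of care is that the dual basis must be understood as computed within $H$ rather than within all of $\R^{\varphi (q)/2}$, and this is forced by the rank count $\varphi (q)/2-1=\dim H$.
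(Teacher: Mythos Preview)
Your proof is correct and follows the same approach as the paper, which simply declares the lemma ``an immediate result from the fact that the image $\Log{u}\in\R^{\varphi (q)/2}$ for $u\in\Ok^*$ is orthogonal to the all-$1$ vector.'' You have just filled in the details the paper omits, namely why the dual vectors $\bb_j^{\vee}$ themselves lie in $\boldsymbol{1}^{\perp}$ and hence qualify as test vectors in the definition of $D(t,\alpha)$.
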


Below, we specify the parameters in our setting of $K$ for some certain Gaussian distributions.

\begin{lem}
    \label{lem:prob}
    Let $n=\varphi (q)/2$ and $\ell =n-1$.
    Then $D(t,\alpha)$ with $t>0$ and $\alpha =1-(\varphi (q)-2)e^{-t/2}$ exists for some Gaussian distributions that have standard deviation $r$ if $t\geq T$, where $T$ is the universal constant given in~\cite[Lemma 5.4]{CDPR15}.
\end{lem}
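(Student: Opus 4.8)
The plan is to establish Lemma~\ref{lem:prob} by reducing the abstract distributional requirement defining $D(t,\alpha)$ to the concrete tail bound for sums of $\log(\hat{X}_i)$ that appears in the hypothesis of Theorem~\ref{thm:CDPR_main}, and then quoting the latter. Recall that $D(t,\alpha)$ is required to satisfy: for every tuple of unit vectors $\vv_1,\dots,\vv_\ell\in\R^{\varphi(q)/2}$ orthogonal to $\boldsymbol 1$, the event $\abs{\langle\Log g,\vv_j\rangle}<t$ for all $j$ holds with probability $\geq\alpha$. So the task is to exhibit a Gaussian distribution on $K$ — namely, the one whose logarithmic embedding has coordinates $\log(\hat X_i)$ with $\hat X_i=(X_i^2+X_i'^2)^{1/2}$ and $X_i,X_i'$ i.i.d.\ $N(0,r^2)$ — for which this holds with $\alpha=1-(\varphi(q)-2)e^{-t/2}$.

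First I would make precise the correspondence between the $\ell=n-1$ unit vectors $\vv_j$ orthogonal to $\boldsymbol 1$ and the coefficient vectors $(a_i^{(j)})_i$ appearing in Theorem~\ref{thm:CDPR_main}: writing $\Log g = (\log(\hat X_1),\dots,\log(\hat X_n))$, we have $\langle\Log g,\vv_j\rangle = \sum_i v_{j,i}\log(\hat X_i)$, and since $\vv_j\perp\boldsymbol 1$ this is insensitive to an additive constant in $\log(\hat X_i)$, so one may freely center. Second, I would invoke the tail hypothesis of Theorem~\ref{thm:CDPR_main} with $a_i^{(j)}=v_{j,i}$: it gives $\prob{\exists j,\ \abs{\sum_i a_i^{(j)}\log(\hat X_i)}\geq t}\leq(\varphi(q)-2)e^{-t/2}$ whenever $t>T$, hence the complementary event — $\abs{\langle\Log g,\vv_j\rangle}<t$ for all $j$ — has probability at least $1-(\varphi(q)-2)e^{-t/2}=\alpha$. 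This is exactly the defining property of $D(t,\alpha)$ with $\ell=n-1$, so the Gaussian distribution above realizes $D(t,\alpha)$. The constant $T$ is the one from \cite[Lemma 5.4]{CDPR15}, so the condition $t\geq T$ is precisely what is needed for the tail bound to be valid.

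The only genuine subtlety — and the step I would flag as the main obstacle — is checking that the tail bound of Theorem~\ref{thm:CDPR_main} is being applied to the \emph{right} family of coefficient vectors: the theorem quantifies over tuples $\vv^{(1)},\dots,\vv^{(\varphi(q)/2-1)}$ that are simultaneously orthogonal to $\boldsymbol 1$ and of norm $1$, which matches the defining quantification for $D(t,\alpha)$ with $\ell=n-1$ once we identify coordinates. One must also confirm that the distribution of $(\log\hat X_i)_i$ is genuinely the logarithmic embedding of a well-defined distribution on $K$ (equivalently on $\R^{\varphi(q)/2}$ via the logarithmic embedding), which is immediate since the $\hat X_i$ are positive and the logarithmic embedding is a lattice map onto its image; this is the standing setup of \cite{CDPR15}. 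Beyond this bookkeeping the proof is a direct citation, so I would keep it short: state the correspondence, apply the hypothesis of Theorem~\ref{thm:CDPR_main}, take complements, and conclude.

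\begin{proof}
Let $n=\varphi(q)/2$ and take $X_1,\dots,X_n,X_1',\dots,X_n'$ i.i.d.\ $N(0,r^2)$ with $\hat X_i=(X_i^2+X_i'^2)^{1/2}$, and let $g$ be the element of $K$ whose logarithmic embedding is $\Log g=(\log\hat X_1,\dots,\log\hat X_n)\in\R^{n}$; this is the Gaussian distribution in the statement of Theorem~\ref{thm:CDPR_main}. Fix any tuple of unit vectors $\vv_1,\dots,\vv_{n-1}\in\R^{n}$ with $\vv_j\perp\boldsymbol 1$, and write $\vv_j=(a_i^{(j)})_{i=1}^{n}$. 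Then
\[
\langle\Log g,\vv_j\rangle=\sum_{i=1}^{n}a_i^{(j)}\log(\hat X_i).
\]
Since $t\geq T$, the hypothesis of Theorem~\ref{thm:CDPR_main} applies to the coefficient vectors $(a_i^{(j)})_i$ and yields
\[
\prob{\exists\,j,\ \Bigl|\sum_{i=1}^{n}a_i^{(j)}\log(\hat X_i)\Bigr|\geq t}\leq(\varphi(q)-2)\,e^{-t/2}.
\]
Taking complements, $\abs{\langle\Log g,\vv_j\rangle}<t$ holds for all $j=1,\dots,n-1$ with probability at least $1-(\varphi(q)-2)e^{-t/2}=\alpha$. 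As the tuple $\vv_1,\dots,\vv_{n-1}$ was arbitrary, this Gaussian distribution realizes $D(t,\alpha)$ with $\ell=n-1$, which is what we had to prove.
\end{proof}
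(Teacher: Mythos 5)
Your proof is correct and follows essentially the same route as the paper: identify $\Log g$ coordinatewise with $(\log\hat X_i)_i$, apply the tail bound to the coefficient vectors $v^{(j)}_i$, and pass to the complement to obtain $\alpha=1-(\varphi(q)-2)e^{-t/2}$. One small point of hygiene: you attribute the tail bound to ``the hypothesis of Theorem~\ref{thm:CDPR_main},'' but it is cleaner to cite \cite[Lemma 5.4]{CDPR15} directly, as the paper does -- both because the bound is actually a conclusion within Theorem~\ref{thm:CDPR_main} rather than a hypothesis, and because Lemma~\ref{lem:prob} feeds (through Corollary~\ref{cor:CDPR_main}) into the proof of Theorem~\ref{thm:CDPR_main}, so citing the external source avoids even the appearance of circularity.
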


\begin{proof}[Proof of Lemma~\ref{lem:prob}]
    The corollary follows immediately from~\cite[Lemma 5.4]{CDPR15} over $K$ by taking $n=\varphi (q)/2$ and $\ell =n-1$.
    Here we identify the elements of $K$ by real and imaginary parts of their image under complex embeddings, i.e., $\Psi :\Ok\rightarrow\R^{\varphi (q)}$ such that $\Psi (a)=(\operatorname{Re}(\sigma_j(a)),\operatorname{Im}(\sigma_j(a)))_{j=1,\dots ,\varphi (q)/2}$.
    More precisely, the random variables $X_i$ and $X^{\prime}_i$ correspond to $\left(\operatorname{Re}(\sigma_i (a))\right)_{a\in K}$ and $\left(\operatorname{Im}(\sigma_i(a))\right)_{a\in K}$, respectively.
    It follows that the random variables $\hat{X}_i$ correspond to $\left(\abs{\sigma_i (a)}\right)_{a\in K}$ and thus $\log\left(\hat{X}_i\right)$ correspond to  $\left(\Log{a}\right)_{a\in K}$.
    Let $\vv^{(1)} ,\dots ,\vv^{(\varphi (q)/2-1)}\in\R^{\varphi (q)/2}$ be vectors with Euclidean norm 1 that are orthogonal to the all-1 vector.
    Therefore,~\cite[Lemma 5.4]{CDPR15} indicates that for entries $v^{(j)}_i$ of $\vv^{(j)}$,
    \[
    \prob{\exists j,\abs{\sum_i v_i^{(j)}\log (\hat{X}_i)}\geq t}\leq \left(\varphi (q)-2\right) e^{-t/2},
    \]
    which implies that
    \begin{align*}
    & \prob{\abs{\left\langle\left(\Log{\hat{X}_i}\right)_{i=1,\dots ,\varphi (q)/2},\vv^{(j)}\right\rangle} <t\text{ for all }j=1,\dots ,\varphi (q)/2-1}\\
    & >1-\left(\varphi (q)-2\right) e^{-t/2}.
    \end{align*}
    Hence, $\hat{X}_1 ,\dots ,\hat{X}_{\varphi (q)/2}$ are i.i.d. $D\left( t, 1-\left(\varphi (q)-2\right) e^{-t/2}\right)$ random variables.
\end{proof}

We first show an immediate consequence deduced from results of~\cite{CDPR15}.
\begin{cor}
\label{cor:CDPR_main}
    If the parameter $t$ satisfies $\frac{1}{2\norm{\bb_j^\vee}_2}>t>T$ for some universal constant $T$, then there is an efficient algorithm that given $g^\prime =g\cdot u$, where $g$ is chosen from the distribution given by $D(t,\alpha)=D(t, 1-\left(\varphi (q)-2\right)e^{-t/2})$ and $u\in\CycUnitGp$ is a cyclotomic unit, outputs an element of the form $\pm\zeta^jg$ with probability at least $\alpha =1-\left(\varphi (q)-2\right)e^{-t/2}$.
\end{cor}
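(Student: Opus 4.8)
The plan is to obtain Corollary~\ref{cor:CDPR_main} directly from Theorem~\ref{thm:CDPR_main}, Lemma~\ref{lem:districution_Log(u)} and Lemma~\ref{lem:prob}: the corollary is nothing but the restatement of Theorem~\ref{thm:CDPR_main} with the abstract distribution $D(t,\alpha)$ in place of the explicit variables $\hat X_i$, so the ``proof'' is really a translation between the two formulations rather than a new argument.

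First I would invoke Lemma~\ref{lem:prob}. Since we assume $t>T$, the distribution $D(t,\alpha)$ with $\alpha=1-(\varphi(q)-2)e^{-t/2}$ exists, and concretely the variables $\hat X_1,\dots,\hat X_{\varphi(q)/2}$ --- formed from i.i.d.\ mean-zero Gaussians of some standard deviation $r>0$ and transported to $K$ via the identification $\Psi$ of $\Ok$ with $\R^{\varphi(q)}$ --- are i.i.d.\ $D(t,\alpha)$ random variables. Hence the element $g\in K$ whose complex absolute values equal the $\hat X_i$ is distributed according to $D(t,\alpha)$, so it suffices to run the algorithm provided by Theorem~\ref{thm:CDPR_main} on the input $g'=g\cdot u$.

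Next I would verify the hypotheses of Theorem~\ref{thm:CDPR_main} for the tuple of normalized dual vectors $\bb_j^\vee/\norm{\bb_j^\vee}_2$. Each has Euclidean norm $1$, and each lies in the span of $\Log{\CycUnitGp}\subset\R^{\varphi(q)/2}$, which is orthogonal to the all-one vector since the logarithmic embedding of any unit has coordinate sum $0$; writing these vectors as $(a_i^{(j)})_i$, the probability bound required by Theorem~\ref{thm:CDPR_main} is exactly the estimate of \cite[Lemma~5.4]{CDPR15} recorded in the proof of Lemma~\ref{lem:prob}. The two-sided hypothesis $\tfrac{1}{2\norm{\bb_j^\vee}_2}>t>T$ now does the remaining work: the lower bound $t>T$ makes Lemma~\ref{lem:prob} applicable, while Lemma~\ref{lem:districution_Log(u)} together with $t<\tfrac{1}{2\norm{\bb_j^\vee}_2}$ gives, with probability at least $\alpha$,
\[
\abs{\langle\Log{g},\bb_j^\vee\rangle}=\norm{\bb_j^\vee}_2\cdot\abs{\langle\Log{g},\bb_j^\vee/\norm{\bb_j^\vee}_2\rangle}<t\,\norm{\bb_j^\vee}_2<\tfrac12
\]
for every $j$. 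This is precisely the rounding condition under which the log-unit-lattice decoding step of \cite[Theorem~4.1]{CDPR15} recovers $g$ up to a root of unity and a sign, so Theorem~\ref{thm:CDPR_main} yields an efficient algorithm outputting an element of the form $\pm\zeta^jg$ with probability at least $\alpha=1-(\varphi(q)-2)e^{-t/2}$, which is the assertion.

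I do not expect a genuine obstacle here; the only point needing care is the bookkeeping of the identification $\Psi$ that links the abstract $D(t,\alpha)$ in the statement with the Gaussian construction of \cite[Lemma~5.4]{CDPR15} --- exactly what Lemma~\ref{lem:prob} records --- together with the routine checks that the normalized dual basis has norm $1$ and is orthogonal to the all-one vector. Once these are in place the corollary is immediate.
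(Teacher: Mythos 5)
Your argument is substantively correct and follows the same route as the paper: both verify Babai's rounding condition $\abs{\langle\Log{g},\bb_j^\vee\rangle}<\frac12$ by combining Lemma~\ref{lem:districution_Log(u)} with the upper bound $t<\frac{1}{2\norm{\bb_j^\vee}_2}$, use Lemma~\ref{lem:prob} together with $t>T$ to supply the probability $\alpha = 1-(\varphi(q)-2)e^{-t/2}$, and invoke the round-off decoder of \cite[Claim 2.1, Theorem 4.1]{CDPR15} to produce $\pm\zeta^j g$. One caveat: your opening plan states that you will deduce Corollary~\ref{cor:CDPR_main} from Theorem~\ref{thm:CDPR_main}, but the paper's appendix proceeds in the reverse direction --- Theorem~\ref{thm:CDPR_main} is obtained \emph{from} Corollary~\ref{cor:CDPR_main} together with the CDPR15 statements, not conversely. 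Since your proof body independently cites \cite[Theorem 4.1]{CDPR15} for the algorithm and \cite[Lemma 5.4]{CDPR15} (via Lemma~\ref{lem:prob}) for the probability, no genuine circularity enters the argument; still, the phrase ``the algorithm provided by Theorem~\ref{thm:CDPR_main}'' should be replaced by a direct appeal to Babai's round-off procedure as in \cite{CDPR15}, so that the dependency graph of the exposition remains acyclic.
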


We mainly follow the proof of~\cite[Theorem 4.1]{CDPR15}, and take care of the parameters and probability.

\begin{proof}
Let $t$ satisfies that $\frac{1}{2\norm{\bb_j^{\vee}}_2}>t>T$.
The idea is to find the magnitude of $u$ by computing $\Log{u}$, and to divide $g^{\prime}$ by $u$.
Notice that under the logarithmic embedding, we have the relation $\Log{g^{\prime}}=\Log{g}+\Log{u}\in\R^{\varphi (q)/2}$ with $\Log{u}\in\Log{\CycUnitGp}$ and $\Log{g}\in\Log{\Ok^*}$, whose form is suitable for Babai's round-off algorithm~(\cite[Claim 2.1]{CDPR15}).

Then the algorithm goes by first finding $\Log{u}$ with Babai's algorithm, then computing $u^{\prime}=\prod b_j^{a_j}$, where $a_j$ are integer coefficients of $\Log{u}=\sum a_j\bb_j$, and finally outputting $g^{\prime}/u^{\prime}$.
Since $\Log{u^{\prime}}=\Log{u}$ implies that $g^{\prime}/u^{\prime}=gu/u^{\prime}=\pm\zeta^jg$ for some sign and some $j\in\{1,\dots , q\}$, it suffices to show the fitness for applying Babai's algorithm, and the probability for allowing to apply it.

From Theorem~\ref{thm:313b}, the short generator algorithm succeeds only if Babai's algorithm succeeds.
According to~\cite[Claim 2.1]{CDPR15}, in order to apply Babai's algorithm, we need to ensure the input generator $g$ meets the requirement that 
\[\abs{\langle\Log{g},\bb_j^{\vee}\rangle}<1/2\]
for all $j\in G\setminus\{ 1\}$.

By Lemma~\ref{lem:districution_Log(u)} and the assumption $\frac{1}{2\norm{\bb_j^{\vee}}_2}>t$, we have 
\begin{align*}
    \abs{\langle\Log{g},\bb_j^{\vee}\rangle} & =\lVert\bb_j^{\vee}\rVert_2\cdot\abs{\left\langle\Log{g},\frac{\bb_j^{\vee}}{\lVert\bb_j^{\vee}\rVert_2}\right\rangle}\\
    & <\lVert\bb_j^{\vee}\rVert_2\cdot t\\
    & <\frac{1}{2}
\end{align*}
as claimed.
Then by Lemma~\ref{lem:prob} and the assumption that $t\geq T$, the success probability is lower bounded by $\alpha =1-(\varphi (q)-2)e^{-t/2}$ as claimed. 
\end{proof}

Then Theorem~\ref{thm:CDPR_main} immediately follows~\cite[Theorem 3.1, Theorem 4.1, Lemma 5.4]{CDPR15} and Corollary~\ref{cor:CDPR_main}.

\end{document}